\theoremstyle{plain}
\newtheorem{theorem}{Theorem}[section]
\newtheorem{lemma}[theorem]{Lemma}
\newtheorem{corollary}[theorem]{Corollary}
\newtheorem{proposition}[theorem]{Proposition}
\theoremstyle{definition}
\newtheorem{definition}[theorem]{Definition}
\theoremstyle{remark}
\begin{document}

		

\title{On the local genus distribution of graph embeddings}

\author{Ricky X. F. Chen$^a$, Christian M. Reidys$^b$\\
	\small Virginia Bioinformatics Institute and Dept. of Mathematics,\\[-0.8ex]
	\small Virginia Tech, 1015 Life Sciences Circle,\\[-0.8ex]
	\small Blacksburg, VA 24061, USA\\
	\small\tt $^a$chen.ricky1982@gmail.com, $^b$duck@santafe.edu
}

\date{}
\maketitle


\begin{abstract}
The $2$-cell embeddings of graphs on closed surfaces have been widely studied.
It is well known that ($2$-cell) embedding a given graph $G$ on a closed
orientable surface is equivalent to cyclically ordering
the edges incident to each vertex of $G$. In this paper, we study the following problem:
given a genus $g$ embedding $\epsilon$ of the graph $G$ and a vertex of $G$,
how many different ways of reembedding the vertex such that the resulting embedding
$\epsilon'$ is of genus $g+\Delta g$?
We give formulas to compute this quantity and the local minimal genus achieved by reembedding.
In the process we obtain miscellaneous results. In particular,
if there exists a one-face embedding of $G$, then the probability of a random embedding of $G$
to be one-face is at least $\prod_{\nu\in V(G)}\frac{2}{deg(\nu)+2}$, where
$deg(\nu)$ denotes the vertex degree of $\nu$.
Furthermore we obtain an easy-to-check necessary condition for a given embedding of $G$ to
be an embedding of minimum genus.

  \bigskip\noindent \textbf{Keywords:} Graph embedding; Genus; Plane permutation;  Hypermap
  
  \noindent\small Mathematics Subject Classifications 2010: 05C30; 05C10; 97K30

\end{abstract}



\section{Introduction}

Graph embedding is one of the most important topics in topological graph theory.
In particular, $2$-cell embeddings of graphs (loops and multiple edges allowed) have been widely studied.
A $2$-cell embedding or map of a given graph $G$ on a closed surface of genus $g$, $S_g$,
is an embedding on $S_g$ such that the complement of any face is homeomorphic to an open disk.
The closed surfaces could be either orientable or unorientable. In this paper,
we restrict ourselves to the orientable case.

Let $g_{min}(G)$ and $g_{max}(G)$ denote the minimum and the maximum genus $g$ of the embeddings of
$G$, respectively.
There are many studies on determining these quantities and related problems~
\cite{duke,furst,liu2,jung,martin,liu,lzv,mohar,nebe,white,youngs,thoma,tesar,thoma2,liu3,xuong2}.
Assume $G$ has $e$ edges and $v$ vertices, and that $G$ is embedded in $S_g$ via $\epsilon$.
In view of Euler's characteristic formula,
\begin{align}
v-e+f=2-2g \quad \Longleftrightarrow \quad 2g=\beta(G)+1-f,
\end{align}
where $f\geq 1$ is the number of faces of $\epsilon$ and $\beta(G)$ is the Betti number of $G$. Thus,
the largest possible value of $g$ is $\lfloor\frac{\beta(G)}{2}\rfloor$. 

Any embedding of $G$ in a closed orientable surface can be equivalently represented by a
fatgraph, generated by $G$~\cite{edmonds,youngs}. A fatgraph generated by $G$ is the graph
$G$ with a specified cyclic order of edges around (i.e., incident to) each vertex of $G$,
i.e.~the topological properties of the embedding are implied in the cyclic orderings of edges.
Any variation of the local topological structure around a vertex, i.e., the cyclic order of edges
around the vertex, may change the topological properties of the whole embedding, e.g.~the genus of
the embedding.

Plane permutations \cite{chr-1} were recently used to study hypermaps, in particular to enumerate
hypermaps with one face. Since maps are particular hypermaps, we shall
employ plane permutations in order to study graph embeddings.

The paper is organized as follows: in Section $2$, we establish some basic facts on hypermaps.
In Section $3$, we study embeddings with one face.
We ask which local embeddings (reembeddings) of a fixed vertex do not affect the topological genus.
By changing the local embedding, we mean changing the cyclic order of edges around the vertex.
We shall show that the probability of preserving the genus
is at least $\frac{2}{deg(\nu)+2}$ for reembedding any vertex $\nu$ of degree (i.e., valence) $deg(\nu)$.
In addition we show that there exists at least one alternative way to reembed a vertex $\nu$
preserving the genus if $deg(\nu)\geq 4$.

In Section~$4$, we study embeddings with more than one face:
given a genus $g$ embedding $\epsilon$ of the graph $G$ and a vertex of $G$,
how many different ways of reembedding the vertex such that the resulting embedding
$\epsilon'$ is of genus $g+\Delta g$?
We give a formula to compute this quantity and the local minimal genus achieved by reembedding,
as well as an easy-to-check necessary condition for an embedding of $G$ to be
of minimum genus.


\section{Maps, hypermaps and plane permutations}

Let $\mathcal{S}_n$ denote the group of permutations, i.e.~the group
of bijections from $[n]=\{1,\dots,n\}$ to $[n]$, where the multiplication
is the composition of maps.
We shall use the following two representations of a permutation $\pi$:\\
\emph{two-line form:} the top line lists all elements in $[n]$, following the natural order.
The bottom line lists the corresponding images of elements on the top line, i.e.
\begin{eqnarray*}
	\pi=\left(\begin{array}{ccccccc}
		1&2& 3&\cdots &n-2&{n-1}&n\\
		\pi(1)&\pi(2)&\pi(3)&\cdots &\pi({n-2}) &\pi({n-1})&\pi(n)
	\end{array}\right).
\end{eqnarray*}
\emph{cycle form:} regarding $\langle \pi\rangle$ as a cyclic group, we represent $\pi$ by its
collection of orbits (cycles).
The set consisting of the lengths of these disjoint cycles is called the cycle-type of $\pi$.
We can encode this set into a non-increasing integer sequence $\lambda=\lambda_1
\lambda_2\cdots$, where $\sum_i \lambda_i=n$, or as $1^{a_1}2^{a_2}\cdots n^{a_n}$, where
we have $a_i$ cycles of length $i$. A cycle of length $k$ will be called a $k$-cycle.

For a permutation $\pi$ on $[n]$, we denote its total number of cycles by $C(\pi)$,
and we denote $Par_{\pi}$ the partition of $[n]$ induced by the cycles of $\pi$, i.e.,
every set of elements in a same cycle of $\pi$ contributes a part (or block) in $Par_{\pi}$.

A map or fatgraph having $n$ edges is a triple of permutations $(\alpha,\beta,\gamma)$ on
$[2n]$ where $\alpha$ is a fixed-point free involution and $\gamma=\alpha\beta$.
This can be seen as follows: we denote the two ends of an edge as half edges and
label the half edges using the labels from the set $[2n]$ so that each label appears exactly
once. This induces two permutations $\alpha$ and $\beta$, where $\alpha$ is a fixed point
free involution, whose cycles consist of the labels of the two half edges of the same
(untwisted) edge and $\beta$-cycles represent the counterclockwise cyclic arrangement of
all half edges incident to the same vertex. $\gamma=\alpha\beta$-cycles are called the
boundaries components.  The topological genus of a map $(\alpha,\beta,\gamma)$ satisfies
\begin{align}
C(\beta)-C(\alpha)+C(\gamma)=2-2g.
\end{align}
Any map induces a unique graph $G$, via $(\alpha,Par_{\beta})$, where each block
corresponds to a $G$-vertex and each $\alpha$-cycle determines a $G$-edge.

Hypermaps represent a generalization of maps by allowing hyper-edges, i.e., triples
$(\alpha, \beta, \gamma)$, where $\gamma=\alpha\beta$ and $\alpha$ is not necessarily
fixed point free. We can also define genus $g$ of a hypermap $(\alpha,\beta,\gamma)$ on
$[n]$, e.g., \cite{walsh2}, by
\begin{align}
C(\alpha)+C(\beta)+C(\gamma)-n=2-2g.
\end{align}
We will also call the pair $G=(\alpha, Par_{\beta})$ the underline graph of the hypermap
although it does not induce a conventional graph. Furthermore, any hypermap
$(\alpha',\beta',\gamma')$ having $G$ as the underlying graph is called an embedding of $G$.

\begin{definition}[Cyclic plane permutation]
	A cyclic plane permutation on $[n]$ is a pair $\mathfrak{p}=(s,\pi)$ where
	$s=(s_i)_{i=0}^{n-1}$ is an $n$-cycle and $\pi$ is an arbitrary permutation on $[n]$.
	The permutation $D_{\mathfrak{p}}=s\circ \pi^{-1}$ is called the diagonal of $\mathfrak{p}$.
\end{definition}\label{2def1}

Given $s=(s_0s_1\cdots s_{n-1})$, a cyclic plane permutation $\mathfrak{p}=(s,\pi)$ can be
represented by two aligned rows:
\begin{equation}
(s,\pi)=\left(\begin{array}{ccccc}
s_0&s_1&\cdots &s_{n-2}&s_{n-1}\\
\pi(s_0)&\pi(s_1)&\cdots &\pi(s_{n-2}) &\pi(s_{n-1})
\end{array}\right).
\end{equation}
Indeed, $D_{\mathfrak{p}}$ is determined by the diagonal-pairs (cyclically) in the two-line
representation here, i.e., $D_{\mathfrak{p}}(\pi(s_{i-1}))=s(s_{i-1})=s_i$ for $0<i< n$, and
$D_{\mathfrak{p}}(\pi(s_{n-1}))=s_0$.
W.l.o.g.~we shall assume $s_0=1$ and by ``the cycles of $\mathfrak{p}=(s,\pi)$'' mean the
cycles of $\pi$. We will refer to the blocks in $Par_{\pi}$ as $\mathfrak{p}$-vertices or
vertices for short, and elements in a vertex half edges.

Hypermaps having one face, $(\alpha,\beta,\gamma)$, can be represented as cyclic plane
permutations $(\gamma,\beta)$. In particular, one-face maps are cyclic
plane permutations where the diagonals are fixed-point free involutions.
Any cyclic plane permutation having a fixed-point free involution diagonal encodes a
one-face embedding of some graph.

A fatgraph with one boundary component is displayed in Figure~\ref{fig-fig1}.
The two presentations are showing the same one-face map, whose corresponding
cyclic plane permutation reads
\begin{eqnarray*}
	\mathfrak{p}=\left(\begin{array}{cccccccc}
		1&2&3&4&5&6&7&8\\
		1&6&7&8&3&4&5&2
	\end{array}\right).
\end{eqnarray*}

\begin{figure}[!htb]
	\centering
	\includegraphics[scale=.6]{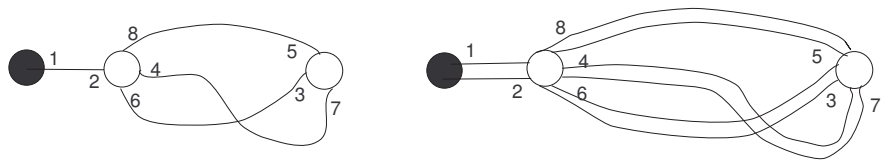}
	{\centering \caption{A one-face map with $4$ edges.}\label{fig-fig1}}
	
\end{figure}

\begin{definition}[Localization]
	Given a cyclic plane permutation $\mathfrak{p}=(s,\pi)$ on $[n]$ and a $\mathfrak{p}$-vertex
	$\nu$, the localization of $\mathfrak{p}$ at $\nu$, $\text{\rm loc}_\nu(\mathfrak{p})=
	(s_{\nu},\pi_{\nu})$ is the cyclic plane permutation
	\begin{equation*}
	(s_{\nu},\pi_{\nu})=
	\left(\begin{array}{ccccc}
	s_{i1}&s_{i2}&\cdots &s_{i(k-1)}&s_{ik}\\
	\pi(s_{i1})&\pi(s_{i2})&\cdots &\pi(s_{i(k-1)}) &\pi(s_{ik})
	\end{array}\right),
	\end{equation*}
	which is obtained by deleting all columns not containing half edges incident to $\nu$
	in the two-line representation of $\mathfrak{p}=(s,\pi)$.
\end{definition}

Let $D_{\nu}$ denote the diagonal of $\text{\rm loc}_{\nu}(\mathfrak{p})$, i.e.,
$D_\nu=s_\nu\circ \pi_\nu^{-1}$. Note that even
if $(s,\pi)$ is a map, $D_\nu$ is not necessarily a fixed-point free involution. For example,
given $(s,\pi)$
$$
\left(\begin{array}{cccccccccccc}
1 & 3 &2 &5 &7 &4 &6 &9 &8 &10 &11 & 12\\
5 & 8 &3 &4 &7 &10 &12 &2 &6 &1 &9 & 11
\end{array}\right),
$$
where $\pi=(1,5,4,10)(2,3,8,6,12,11,9)(7)$,
let $\nu=\{2,3,8,6,12,9,11\}$. Then,
$$
\text{\rm loc}_\nu(\mathfrak{p})=
\left(\begin{array}{ccccccc}
3 & 2 &6 &9 &8 &11 &12\\
8 & 3 &12 &2&6 &9 &11
\end{array}\right),
$$
we arrive at
$
D_{\nu}=(2,8)(3,6,11)(9,12).
$

A set of consecutive diagonal-pairs in
$\mathfrak{p}=(s,\pi)$ is called a diagonal
block. In above example,
$
\begin{array}{ccccc}
&2 &5 &7 &4 \\
8 &3 &4 &7 &
\end{array}
$
is a diagonal block. It is completely determined by its corners, in this case, the
lower left corner, $8$, as well as the upper right corner, $4$. The diagonal block
is denoted by $<8,4>$.

Given a cyclic plane permutation $\mathfrak{p}=(s,\pi)$ on $[n]$ and a sequence
$h=h_1h_2\cdots h_{n-1}$ on $[n-1]$, let $s^h=(s_0, s_{h_1}, s_{h_2}, \ldots s_{h_{n-1}})$, i.e.~$s$ is acted upon by $h$ via translation its
indices,
and $\pi^h=D_{\mathfrak{p}}^{-1} \circ s^h$.  This induces the new cyclic plane permutation $(s^h,\pi^h)$ having by
construction the same diagonal as $(s,\pi)$.
Equivalently, the two-line representation of $(s^h,\pi^h)$ can be obtained by
permuting the diagonal-pairs of $(s,\pi)$. In the following, a cyclic plane
permutation written in the form like $(s^h,\pi^h)$, always means that it is obtained
from $(s,\pi)$ by permuting diagonal-pairs by $h$.

\begin{lemma}[Localization lemma]\label{2lem2}
	Let $(s,\pi), (s',\pi')=(s^H,\pi^H)$ be cyclic plane permutations
	such that $Par_{\pi}=Par_{\pi'}$ and $\pi$ and $\pi'$ exclusively
	differ at the vertex $\nu$.
	Then, there exists some $h$ such that $(s'_{\nu},\pi'_{\nu})=(s_{\nu}^h,\pi_{\nu}^h)$
	and furthermore $(D_{\nu},Par_{\pi_{\nu}})=(D'_{\nu},Par_{\pi'_{\nu}})$.
\end{lemma}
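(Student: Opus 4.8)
The plan is to reduce the entire statement to the single identity $D_\nu = D'_\nu$, from which both remaining assertions follow quickly. First I would record two easy reductions. Since the block $\nu$ is, by construction, one cycle of $\pi$ (and, because $Par_\pi = Par_{\pi'}$, also one cycle of $\pi'$), the localized permutations $\pi_\nu$ and $\pi'_\nu$ are just the restrictions $\pi|_\nu$ and $\pi'|_\nu$, each a single $|\nu|$-cycle. Hence $Par_{\pi_\nu}$ and $Par_{\pi'_\nu}$ are both the trivial partition $\{\nu\}$, so $Par_{\pi_\nu} = Par_{\pi'_\nu}$ for free. Moreover, once I know $D_\nu = D'_\nu$, the two cyclic plane permutations $\mathrm{loc}_\nu(\mathfrak{p}) = (s_\nu, \pi_\nu)$ and $\mathrm{loc}_\nu(\mathfrak{p}') = (s'_\nu, \pi'_\nu)$ live on the same ground set $\nu$ and share the same diagonal; writing $s'_\nu$ as a linear sequence starting at the initial entry of $s_\nu$ and reading off the resulting reordering of indices yields an $h$ with $s_\nu^h = s'_\nu$, whence $\pi_\nu^h = D_\nu^{-1} s_\nu^h = D_\nu^{-1} s'_\nu = \pi'_\nu$. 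This is exactly the statement that $(s'_\nu,\pi'_\nu)$ is obtained from $(s_\nu,\pi_\nu)$ by permuting diagonal pairs, i.e.\ it has the required form $(s_\nu^h,\pi_\nu^h)$.

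The crux is therefore the identity $D_\nu = D'_\nu$, and the main idea is to describe $D_\nu$ as a \emph{first return map} that manifestly depends only on data shared by $\mathfrak{p}$ and $\mathfrak{p}'$. Using $D_{\mathfrak{p}}\pi = s$, I would first show that for every $y \in \nu$,
\[
D_\nu(y) = s^{\,t}\bigl(D_{\mathfrak{p}}(y)\bigr),
\]
where $t \ge 0$ is the least integer with $s^{t}(D_{\mathfrak{p}}(y)) \in \nu$; that is, $D_\nu(y)$ is the first element of the $s$-orbit of $D_{\mathfrak{p}}(y)$ lying in $\nu$ (with $t=0$ when $D_{\mathfrak{p}}(y)\in\nu$). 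To see this I would trace the localization column by column: if $y = \pi(s_{i})$ is the bottom of a $\nu$-column at global position $i$, then $D_\nu(y)$ is the top of the next $\nu$-column, and since $s(s_i) = D_{\mathfrak{p}}(\pi(s_i)) = D_{\mathfrak p}(y)$ the first $s$-step already lands on $D_{\mathfrak{p}}(y)$, after which one iterates $s$ through the intervening non-$\nu$ columns until $\nu$ is met again.

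With this formula in hand the identity is immediate. Because $\mathfrak{p}' = (s^H,\pi^H)$ is produced by permuting diagonal pairs, it has the same diagonal, $D_{\mathfrak{p}'} = D_{\mathfrak{p}}$, and hence the same starting point $D_{\mathfrak{p}'}(y) = D_{\mathfrak{p}}(y)$ for the return computation. Next, for any $z \notin \nu$ one has $\pi'(z) = \pi(z)$ (the two permutations differ only on $\nu$), so $s'(z) = D_{\mathfrak{p}'}(\pi'(z)) = D_{\mathfrak{p}}(\pi(z)) = s(z)$; that is, $s$ and $s'$ agree off $\nu$. Since, by minimality of $t$, the entire return trajectory starting at $D_{\mathfrak{p}}(y)$ stays outside $\nu$ until its final step, it is computed using only $s|_{[n]\setminus\nu} = s'|_{[n]\setminus\nu}$ together with the common value $D_{\mathfrak{p}}(y)$. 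Hence the trajectories for $\mathfrak{p}$ and $\mathfrak{p}'$ coincide, their first returns to $\nu$ coincide, and $D_\nu(y) = D'_\nu(y)$ for all $y \in \nu$.

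I expect the main obstacle to be the careful justification of the first-return formula, in particular the passage from the column-adjacency description of $D_\nu$ inside $\mathrm{loc}_\nu(\mathfrak{p})$ to the global iteration of $s$, and the verification that every intermediate step genuinely avoids $\nu$ (so that only $\pi|_{[n]\setminus\nu}$ is consulted). The boundary cases---an immediate return $D_{\mathfrak{p}}(y)\in\nu$ and the wrap-around across the initial position of $s$---should be checked explicitly but present no real difficulty. I would finally sanity-check the formula against the worked example in the text, where $\nu=\{2,3,6,8,9,11,12\}$ indeed yields $D_\nu=(2,8)(3,6,11)(9,12)$.
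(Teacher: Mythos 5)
Your proposal is correct and follows essentially the same route as the paper: both reduce the lemma to the identity $D_\nu=D'_\nu$ and establish it by tracing the $s$-orbit from $D_{\mathfrak{p}}(y)$ until its first return to $\nu$, using that $D_{\mathfrak{p}}=D_{\mathfrak{p}'}$ and that $\pi,\pi'$ (hence $s,s'$) agree off $\nu$. Your ``first return map'' formulation is just a compact repackaging of the paper's step-by-step matching of diagonal blocks, and your verification against the worked example confirms it.
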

\begin{proof}
	Assume $\mathfrak{p}=(s,\pi)$ and $\mathfrak{p'}=(s', \pi')$ are respectively
			
				\begin{eqnarray*}
					& \left(
					\vcenter{\xymatrix@C=0pc@R=1pc{
							\cdots s_{i_0} & s_{i_0+1} &{\cdots} & s_{i_1}\ar@{--}[dl] & \cdots & s_{i_{k-2}} &{\cdots} & s_{i_{k-1}}\quad \ar@{--}[dl] \cdots\\
							\cdots \pi(s_{i_0})\ar@{--}[ur] & {\cdots} & \pi(s_{i_1-1}) & \pi(s_{i_1}) & \cdots & \pi(s_{i_{k-2}})\ar@{--}[ur] & {\quad \cdots \quad} & \pi(s_{i_{k-1}}) \cdots
						}}
						\right),\\
						& \left(
						\vcenter{\xymatrix@C=0pc@R=1pc{
								\cdots s'_{i_0} & s'_{i_0+1} &{\cdots} & s'_{i_1}\ar@{--}[dl] & \cdots & s'_{i_{k-2}} &{\cdots} & s'_{i_{k-1}}\quad \ar@{--}[dl] \cdots\\
								\cdots \pi'(s'_{i_0})\ar@{--}[ur] & {\cdots} & \pi'(s'_{i_1-1}) & \pi'(s'_{i_1}) & \cdots & \pi'(s'_{i_{k-2}})\ar@{--}[ur] & {\quad \cdots \quad} & \pi'(s'_{i_{k-1}}) \cdots
							}}
							\right),
						\end{eqnarray*}
			where we assume $\nu=\{s_{i_0},s_{i_1},\ldots s_{i_{k-1}}\}=
			\{s'_{i_0},s'_{i_1},\ldots s'_{i_{k-1}}\}$ and $s'_0=s_0$.
			Since by assumption $\pi$ and $\pi'$ only differ at the vertex $\nu$, we have $s_j=s'_j$
			for $0 \leq j \leq i_0$. Furthermore $Par_{\pi}=Par_{\pi'}$ implies
			$Par_{\pi_{\nu}}=Par_{\pi'_{\nu}}$.\\
			{\it Claim.} $D_{\nu}=D'_{\nu}$.\\
			To show this we observe that for fixed $j$, each diagonal block
			$<\pi'(s'_{i_j}),s'_{i_{j+1}}>$ equals the diagonal block
			$<\pi(s_{i_l}),s_{i_{l+1}}>$ for some $l(j)$, i.e.,
			\begin{eqnarray*}
				\left(
				\vcenter{\xymatrix@C=0pc@R=1pc{
						& s'_{i_j+1}\ar@{--}[dl] & s'_{i_j+2} &\cdots & s'_{i_{j+1}}\ar@{--}[dl]\\
						\pi'(s'_{i_j}) & \pi'(s'_{i_j+1})  &{\cdots} & \pi'(s'_{i_{j+1}-1}) &
					}}
					\right)=
					\left(
					\vcenter{\xymatrix@C=0pc@R=1pc{
							& s_{i_l+1}\ar@{--}[dl] & s_{i_l+2} &\cdots & s_{i_{l+1}}\ar@{--}[dl]\\
							\pi(s_{i_l}) & \pi(s_{i_l+1})  &{\cdots} & \pi(s_{i_{l+1}-1}) &
						}}
						\right)
					\end{eqnarray*}
					To prove this, we observe that by construction for fixed $j$ there exists some
					index $l$ such that $\pi(s_{i_l})=\pi'(s'_{i_j})$ holds.
					This implies,
					$$
					s'_{i_j+1}=D_{\mathfrak{p}}\circ \pi'(s'_{i_j})=D_{\mathfrak{p}}\circ \pi(s_{i_l})=s_{i_l+1}.
					$$
					In case of $s'_{i_j+1}\not\in \nu$, we have
					$\pi'(s'_{i_j+1})=\pi(s'_{i_j+1})=\pi(s_{i_l+1})$ and derive
					$$
					s'_{i_j+2}=D_{\mathfrak{p}}\circ \pi'(s'_{i_j+1})=D_{\mathfrak{p}}\circ \pi(s_{i_l+1})=s_{i_l+2}.
					$$
					Iterating we arrive at $s'_{i_{j+1}}=s_{i_{l+1}}$, whence the two diagonal blocks are equal,
					the Claim follows and the proof of the lemma is complete.
				\end{proof}

				\begin{definition}\label{2def3}
					Given a cyclic plane permutation $\mathfrak{p}=(s,\pi)$ on $[n]$ and its localization
					at $\nu$, $\text{\rm loc}_{\nu}(\mathfrak{p})=(s_v,\pi_v)$. Suppose $(s_{\nu}^h,\pi_{\nu}^h)$ is such that
					$Par_{\pi_{\nu}}=Par_{\pi_{\nu}^h}$. Then the inflation of $(s_{\nu}^h,\pi_{\nu}^h)$ w.r.t.
					$\mathfrak{p}$ is the cyclic plane permutation
					$
					\text{\rm inf}_{\mathfrak{p}}((s_\nu^h,\pi_{\nu}^h))
					$,
					obtained from $(s_{\nu}^h,\pi_{\nu}^h)$ by substituting each diagonal-pair with the diagonal
					block in $\mathfrak{p}$ having the diagonal-pair as its corners.
				\end{definition}
				
				Let
				$
				(s_{\nu}^h,\pi_{\nu}^h)=\left(\begin{array}{ccccccc}
				3 & 9 &8 &11 & 2 &6 &12\\
				12 &2&6 & 8 & 3 & 9 &11
				\end{array}\right),
				$
				then the inflation of $(s_{\nu}^h,\pi_{\nu}^h)$ w.r.t. $(s,\pi)$ is
				$$
				\text{\rm inf}_{\mathfrak{p}}((s_\nu^h,\pi_{\nu}^h))=\left(\begin{array}{cccccccccccc}
				1 & 3 &9 &8 &10 &11 &2 &5 &7 &4 &6 & 12\\
				5 & 12&2 &6 &1 &8 &3 &4 &7 &10 &9 & 11
				\end{array}\right).
				$$

				\begin{lemma}[Inflation lemma]\label{lem211}
					Let $ \text{\rm inf}_{\mathfrak{p}}((s_\nu^h,\pi_{\nu}^h))=(s',\pi')$. Then we have
					$$
					(D_{\mathfrak{p}},Par_{\pi})=(D_{\mathfrak{p'}},Par_{\pi'})
					$$
					and $\pi'$ differs from $\pi$ only at the vertex $\nu$.
				\end{lemma}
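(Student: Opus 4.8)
The plan is to read off both the diagonal and the vertex partition of $(s',\pi')$ directly from the two-line array produced by the inflation, exploiting the fact that this array is assembled out of pieces whose diagonal behaviour is already under control. First I would record the structural description built into Definition~\ref{2def3}. Writing $\nu=\{s_{i_0},\ldots,s_{i_{k-1}}\}$ for the positions of the $\nu$-columns in $\mathfrak{p}$, the columns of $\mathfrak{p}$ split into the $k$ $\nu$-columns (the corners) and the non-$\nu$ columns, and each non-$\nu$ column lies in the interior of exactly one diagonal block $<\pi(s_{i_j}),s_{i_{j+1}}>$, whose upper-right corner $s_{i_{j+1}}$ equals $D_{\nu}(\pi(s_{i_j}))$. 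By construction $(s',\pi')$ consists of the $k$ columns of $(s_{\nu}^h,\pi_{\nu}^h)$ together with the interiors of these blocks, each non-$\nu$ column of $\mathfrak{p}$ reappearing once and unchanged. Since $(s_{\nu}^h,\pi_{\nu}^h)$ is obtained from $(s_{\nu},\pi_{\nu})$ by permuting diagonal-pairs, it has the same diagonal $D_{\nu}$, so its diagonal-pairs form exactly the set $\{(x,D_{\nu}(x))\}$, i.e.\ the corner-pairs of the blocks; hence every block is inserted exactly once and the $n$ columns of $(s',\pi')$ constitute a single $n$-cycle on $[n]$.

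Next I would dispatch the second assertion together with the partition. Because the non-$\nu$ columns are copied verbatim from $\mathfrak{p}$, we have $\pi'(x)=\pi(x)$ for every $x\notin\nu$, while on $\nu$ we have $\pi'|_{\nu}=\pi_{\nu}^h$. As $Par_{\pi_{\nu}}=Par_{\pi_{\nu}^h}$ forces $\pi_{\nu}^h$ to permute the set $\nu$ with the same block structure, the block $\nu$ is unchanged; combined with $\pi'=\pi$ off $\nu$ this yields $Par_{\pi'}=Par_{\pi}$ and shows $\pi'$ differs from $\pi$ only at $\nu$.

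The heart of the argument is $D_{\mathfrak{p}}=D_{\mathfrak{p'}}$, and the plan is to verify that every consecutive diagonal-pair of $(s',\pi')$ is a $D_{\mathfrak{p}}$-pair; since the diagonal of a cyclic plane permutation is determined by its diagonal-pairs and these now range over all of $[n]$, this forces $D_{\mathfrak{p'}}=D_{\mathfrak{p}}$. I would split the diagonal-pairs into those internal to an inserted block and those straddling a seam between a $\nu$-column and an adjacent block. The internal pairs are unchanged from $\mathfrak{p}$ and are therefore $D_{\mathfrak{p}}$-pairs by definition of the block. For the seams, inserting the block $<x,D_{\nu}(x)>$ with $x=\pi_{\nu}^h(t)$ places the $\nu$-column bottom $x$ immediately before the block's first top, and by definition of the block $D_{\mathfrak{p}}(x)$ is exactly that top; symmetrically the block's last bottom is sent by $D_{\mathfrak{p}}$ to the upper-right corner $D_{\nu}(x)$, which is precisely the top of the following $\nu$-column. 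Thus both seams are $D_{\mathfrak{p}}$-pairs and the claim follows.

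The step I expect to be the main obstacle is exactly this seam matching, since it is where the local data must glue to the global array. It rests on the compatibility between the localized diagonal $D_{\nu}$ — which is preserved in passing from $(s_{\nu},\pi_{\nu})$ to $(s_{\nu}^h,\pi_{\nu}^h)$ — and the corners of the diagonal blocks of $\mathfrak{p}$, namely that the upper-right corner of the block with lower-left corner $x$ is exactly $D_{\nu}(x)$. This is what guarantees that a reordered diagonal-pair $(x,D_{\nu}(x))$ of $(s_{\nu}^h,\pi_{\nu}^h)$ is filled by a block whose two ends glue correctly to the neighbouring $\nu$-columns; the trivial (empty-interior) blocks, where $D_{\mathfrak{p}}(x)=D_{\nu}(x)$ already holds, must be treated as a separate but routine case.
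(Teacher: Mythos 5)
Your proposal is correct and follows essentially the same route as the paper: the paper's proof simply asserts $D_{\mathfrak{p}'}=D_{\mathfrak{p}}$ ``by construction'' and then notes that non-$\nu$ half edges sit inside the diagonal blocks (so $\pi'=\pi$ there) while $Par_{\pi_\nu}=Par_{\pi_\nu^h}$ keeps $\nu$ as a single block. Your seam-versus-interior analysis of the diagonal-pairs is just a careful unpacking of that ``by construction'' step, and it is sound.
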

				\proof
				By construction, $D_\mathfrak{p'}=D_\mathfrak{p}$. Any half edge not contained in $\nu$, is
				located inside the respective diagonal blocks, whence $\pi$ and $\pi'$ are equal on these
				half edges. All $\nu$-half edges contribute exactly one block in $Par_{\pi}$ and $Par_{\pi'}$,
				since $Par_{\pi_{\nu}}=Par_{\pi_{\nu}^h}$. Accordingly, we have $Par_{\pi}=Par_{\pi'}$, completing
				the proof of the lemma. \endproof

				Combining Lemma~\ref{2lem2} and~\ref{lem211}, we obtain
				\begin{theorem}\label{2thm2}
					Let $\mathfrak{p}=(s,\pi)$ be a cyclic plane permutation.
					Let $X=\{H\mid  Par_{\pi}=Par_{\pi^H}\ \wedge \  \pi(i)=\pi^H(i), \ i\not\in \nu\}$ and
					let $Y=\{h\mid Par_{\pi_{\nu}}=Par_{\pi_{\nu}^h}\}$. Then there is a bijection between $X$ and $Y$
					and we have the commutative diagram:
					\[
					\xymatrixcolsep{10pc}\xymatrix{
						{(s,\pi)} \ar[d]^{\text{\rm loc}_\nu} \ar[r]^{H:\quad Par_{\pi}=Par_{\pi^H}} &{(s',\pi')}\\
						(s_{\nu},\pi_{\nu})\ar[r]_{h:\quad Par_{\pi_{\nu}}=Par_{\pi_{\nu}^h}} &
						(s_{\nu}^h,\pi_{\nu}^h)\ar[u]^{\text{\rm inf}_{\mathfrak{p}}}
					}
					\]
				\end{theorem}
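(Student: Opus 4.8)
The plan is to exhibit the two maps in the diagram, localization $\text{\rm loc}_{\nu}$ and inflation $\text{\rm inf}_{\mathfrak{p}}$, as mutually inverse bijections between $X$ and $Y$; the commutativity of the square then comes for free. Throughout I would identify a sequence $H$ (resp.\ $h$) with the two-line array $(s^H,\pi^H)$ (resp.\ $(s_{\nu}^h,\pi_{\nu}^h)$) it produces. Since $s$ and $s_{\nu}$ have pairwise distinct entries, reading $s^H_j=s_{H_j}$ recovers $H$ uniquely, so these identifications are themselves bijective and it suffices to produce a bijection at the level of the arrays.

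First I would define $\Phi\colon X\to Y$. Given $H\in X$, the array $(s^H,\pi^H)$ satisfies $Par_{\pi}=Par_{\pi^H}$ and differs from $(s,\pi)$ only at $\nu$, so the Localization Lemma~\ref{2lem2} applies and yields an $h$ with $\text{\rm loc}_{\nu}((s^H,\pi^H))=(s_{\nu}^h,\pi_{\nu}^h)$ and $Par_{\pi_{\nu}}=Par_{\pi_{\nu}^h}$; hence $h\in Y$ and I set $\Phi(H)=h$. Conversely I would define $\Psi\colon Y\to X$: given $h\in Y$, the condition $Par_{\pi_{\nu}}=Par_{\pi_{\nu}^h}$ is exactly the hypothesis of Definition~\ref{2def3}, so $\text{\rm inf}_{\mathfrak{p}}((s_{\nu}^h,\pi_{\nu}^h))$ is defined; by the Inflation Lemma~\ref{lem211} it has diagonal $D_{\mathfrak{p}}$ and vertex partition $Par_{\pi}$, and differs from $(s,\pi)$ only at $\nu$. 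As it shares the diagonal of $(s,\pi)$, it is obtained from $(s,\pi)$ by permuting diagonal-pairs, i.e.\ it equals $(s^H,\pi^H)$ for a unique $H$, and that $H$ lies in $X$ by the two properties just listed; I set $\Psi(h)=H$.

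The heart of the argument is then $\Psi\circ\Phi=\mathrm{id}_X$ and $\Phi\circ\Psi=\mathrm{id}_Y$, which I would derive from the fact that localization and inflation are inverse surgeries on the two-line array. Localization deletes exactly the columns whose entries are not $\nu$-half edges; these deleted columns are precisely the interiors of the diagonal blocks of $\mathfrak{p}$ lying strictly between consecutive $\nu$-half edges. The proof of Lemma~\ref{2lem2} already shows that after applying any $H\in X$ each such block reappears intact, identified by its corner diagonal-pair, between the corresponding pair of $\nu$-half edges. Since inflation w.r.t.\ $\mathfrak{p}$ reinserts precisely the block carrying a given corner diagonal-pair, applying $\text{\rm loc}_{\nu}$ and then $\text{\rm inf}_{\mathfrak{p}}$ restores every deleted column and returns $(s^H,\pi^H)$, giving $\Psi(\Phi(H))=H$. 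In the other direction, every interior half edge of an inflated block lies outside $\nu$, so $\text{\rm loc}_{\nu}$ removes exactly the columns $\text{\rm inf}_{\mathfrak{p}}$ inserted and returns $(s_{\nu}^h,\pi_{\nu}^h)$, giving $\Phi(\Psi(h))=h$. The identity $\Psi\circ\Phi=\mathrm{id}_X$ is literally the assertion that the square commutes.

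The step I expect to require the most care is this last bookkeeping, rather than any deep idea: one must check that each diagonal-pair occurring in $(s_{\nu}^h,\pi_{\nu}^h)$ is genuinely the corner pair of an \emph{existing} diagonal block of $\mathfrak{p}$, so that inflation is well defined and really inverts localization, and that the constraint $Par_{\pi_{\nu}}=Par_{\pi_{\nu}^h}$ keeps all $\nu$-half edges inside a single vertex after reinsertion. Both of these are exactly what Lemmas~\ref{2lem2} and~\ref{lem211} supply, so once they are invoked the inverse relations above are immediate and the bijection $\Phi$ together with the commutative diagram follows.
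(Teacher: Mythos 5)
Your proposal is correct and matches the paper's intent exactly: the paper offers no separate proof of Theorem~\ref{2thm2}, stating only that it follows by ``combining'' the Localization Lemma~\ref{2lem2} and the Inflation Lemma~\ref{lem211}, which is precisely the pairing of mutually inverse maps you construct. Your additional bookkeeping showing that localization and inflation undo one another is exactly the detail the paper leaves implicit.
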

				
				
				\section{Embeddings having one face}
				
				
				\begin{lemma}
					Let $\mathfrak{p}=(s,\pi)$ be a cyclic plane permutation with the underline graph
					$G=(D_{\mathfrak{p}},Par_{\pi})$. Then, $\mathfrak{p'}=(s',\pi')$ is an embedding of
					$G$ iff $(s',\pi')=(s^h,\pi^h)$ for some $h$ and $(D_{\mathfrak{p'}},Par_{\pi'})=
					(D_{\mathfrak{p}},Par_{\pi})$.
				\end{lemma}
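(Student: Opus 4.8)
The plan is to unwind the definition of ``embedding'' into permutation data and then invoke the elementary fact that a cyclic plane permutation is completely recovered from its top cycle together with its diagonal. Recall that the one-face hypermap represented by $\mathfrak{p}=(s,\pi)$ is $(\alpha,\beta,\gamma)=(D_{\mathfrak{p}},\pi,s)$, so its underlying graph is precisely $G=(D_{\mathfrak{p}},Par_\pi)$. Consequently, the assertion that the cyclic plane permutation $\mathfrak{p'}=(s',\pi')$ is an embedding of $G$ is, by the very definition of embedding, identical to the equality $(D_{\mathfrak{p'}},Par_{\pi'})=(D_{\mathfrak{p}},Par_\pi)$. This observation already makes the statement almost tautological: the only genuine content is that the clause ``$(s',\pi')=(s^h,\pi^h)$ for some $h$'' is not an additional hypothesis but a consequence of the two diagonals agreeing.

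First I would dispose of the easy implication. If $(s',\pi')=(s^h,\pi^h)$ for some $h$ and $(D_{\mathfrak{p'}},Par_{\pi'})=(D_{\mathfrak{p}},Par_\pi)$, then the second condition is literally the statement that $\mathfrak{p'}$ has underlying graph $G$, so $\mathfrak{p'}$ is an embedding of $G$; here I only need that $s^h$ is again an $n$-cycle, which holds because $h$ is a permutation of $[n-1]$, so that $\mathfrak{p'}$ is a legitimate one-face hypermap.

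The substantive direction is the converse. Assuming $\mathfrak{p'}$ is an embedding of $G$, i.e.\ $(D_{\mathfrak{p'}},Par_{\pi'})=(D_{\mathfrak{p}},Par_\pi)$ and in particular $D_{\mathfrak{p'}}=D_{\mathfrak{p}}$, I would construct $h$ by hand. Normalizing both top cycles via $s_0=s'_0=1$, and using that $s$ is a bijection from the index set $\{0,\dots,n-1\}$ onto $[n]$, I define $h_i$ for $i\in\{1,\dots,n-1\}$ to be the unique index with $s_{h_i}=s'_i$. Since $s_0=s'_0$, the values $h_1,\dots,h_{n-1}$ exhaust $\{1,\dots,n-1\}$, so $h$ is a permutation of $[n-1]$ and $s^h=s'$ by construction. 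The point to verify with care is that this same $h$ also reproduces $\pi'$, and this is exactly where the diagonal hypothesis enters: from $D_{\mathfrak{p}}=s\circ\pi^{-1}$ one reads off the recovery formula $\pi=D_{\mathfrak{p}}^{-1}\circ s$, whence $\pi^h=D_{\mathfrak{p}}^{-1}\circ s^h=D_{\mathfrak{p}}^{-1}\circ s'=D_{\mathfrak{p'}}^{-1}\circ s'=\pi'$, the last step invoking $D_{\mathfrak{p'}}=D_{\mathfrak{p}}$. Thus $(s',\pi')=(s^h,\pi^h)$, supplying the missing clause, and the proof closes.

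I expect no real obstacle beyond bookkeeping: I must justify that the normalization $s_0=s'_0=1$ is legitimate so that the two top cycles can be aligned, and that the constructed $h$ is an honest permutation of $[n-1]$ (not merely a sequence) so that $s^h$ is an $n$-cycle and $(s^h,\pi^h)$ is a bona fide cyclic plane permutation. The conceptual engine is simply the one-to-one correspondence $\mathfrak{p}\leftrightarrow(s,D_{\mathfrak{p}})$, together with the invariance $D_{(s^h,\pi^h)}=D_{\mathfrak{p}}$ that is built into the $h$-action by construction.
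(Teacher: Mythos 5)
Your proposal is correct and follows essentially the same route as the paper: the forward direction is the definitional identity $(D_{\mathfrak{p'}},Par_{\pi'})=(D_{\mathfrak{p}},Par_{\pi})$, the existence of $h$ with $s^h=s'$ is read off from the two $n$-cycles, and the key computation $\pi^h=D_{\mathfrak{p}}^{-1}\circ s^h=D_{\mathfrak{p'}}^{-1}\circ s'=\pi'$ is exactly the paper's. You merely spell out the construction of $h$ that the paper dismisses with ``clearly,'' which is a harmless (and slightly more careful) elaboration.
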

				
				\proof If $\mathfrak{p'}=(s',\pi')$ is an embedding of $G=(D_{\mathfrak{p}},Par_{\pi})$,
				then, by definition, $(D_{\mathfrak{p'}},Par_{\pi'})=(D_{\mathfrak{p}},Par_{\pi})$.
				Thus, $D_{\mathfrak{p}}=D_{\mathfrak{p'}}=s'\circ \pi'^{-1}$. Clearly, there exists some $h$ such
				that $s'=s^h$. By construction we have $\pi^h=D_{\mathfrak{p}}^{-1}\circ s^h=
				D_{\mathfrak{p'}}^{-1}\circ s'=\pi'$. Hence, $(s',\pi')=(s^h,\pi^h)$ for some $h$.
				The converse is clear, whence the lemma.\qed
				
				This lemma shows that any one-face embedding is originated by the action of some $h$ on a
				cyclic plane permutation. Explicitly, by permuting diagonal-pairs based on a fixed
				one-face embedding $(s,\pi)$ of the graph such that $Par_{\pi}=Par_{\pi^h}$.
				
				\begin{corollary}
					Let $\mathfrak{p}=(s,\pi)$ be a one-face map with the underline graph $G=(D_{\mathfrak{p}}, Par_{\pi})$.
					Fixing the (local) embedding of all vertices of $G$ as in $\mathfrak{p}$
					but the vertex $\nu$, each local embedding of $\nu$ leading to a one-face map $\mathfrak{p'}=(s',\pi')$
					corresponds to a $H$ such that $(s',\pi')=(s^H,\pi^H)$ and $\pi'$ differs from $\pi$ only at the vertex $\nu$.
				\end{corollary}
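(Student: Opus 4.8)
The plan is to read the statement off the preceding Lemma, the only substantive task being to check that a one-face reembedding of $\nu$ satisfies that Lemma's hypotheses. First I would fix a reembedding of $\nu$ whose result $\mathfrak{p}'=(s',\pi')$ is a one-face map, and verify that $\mathfrak{p}'$ is an embedding of $G=(D_{\mathfrak{p}},Par_{\pi})$. Reembedding $\nu$ changes only the cyclic order of the half edges incident to $\nu$; it touches neither the edge set nor the vertex set. Consequently the diagonal, which records the edge involution, is unchanged, so $D_{\mathfrak{p}'}=D_{\mathfrak{p}}$, and the half edges of $\nu$ are merely permuted inside the single block $\nu$, so $Par_{\pi'}=Par_{\pi}$. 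Hence $(D_{\mathfrak{p}'},Par_{\pi'})=(D_{\mathfrak{p}},Par_{\pi})$ and $\mathfrak{p}'$ is indeed an embedding of $G$.

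Second I would use the one-face hypothesis to justify the cyclic plane permutation representation, and then apply the Lemma. Since the reembedded map has a single boundary component, that boundary is an $n$-cycle, and we may legitimately write it as $s'$, with $\pi'$ the reembedded rotation and $s'=D_{\mathfrak{p}}\circ\pi'$ the new face. With $\mathfrak{p}'=(s',\pi')$ now a bona fide cyclic plane permutation satisfying $(D_{\mathfrak{p}'},Par_{\pi'})=(D_{\mathfrak{p}},Par_{\pi})$, the preceding Lemma furnishes some $H$ with $(s',\pi')=(s^H,\pi^H)$. The remaining clause, that $\pi'$ differs from $\pi$ only at $\nu$, is immediate: it is precisely what it means to reembed $\nu$ while holding the local embeddings at all other vertices fixed.

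I expect no real obstacle, as the result is essentially a specialization of the Lemma; the single delicate point is the function of the one-face assumption. Were the reembedding of $\nu$ to produce two or more faces, the boundary would no longer be a single $n$-cycle, the pair $(s',\pi')$ would lie outside the cyclic plane permutation framework, and the Lemma could not be invoked. Thus the qualifier ``leading to a one-face map'' is exactly the condition that makes the new face cycle $s'$ an $n$-cycle, and hence makes the identification $(s',\pi')=(s^H,\pi^H)$ available.
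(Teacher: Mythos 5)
Your proposal is correct and follows exactly the route the paper intends: the corollary is stated without proof as an immediate specialization of the preceding Lemma, and your argument simply verifies its hypotheses (unchanged diagonal, unchanged partition, single face giving an $n$-cycle $s'$) before invoking it. Your remark on the role of the one-face assumption in making $(s',\pi')$ a legitimate cyclic plane permutation is a correct and worthwhile clarification of a point the paper leaves implicit.
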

				
				According to the bijection between $H$ and $h$ in Theorem~\ref{2thm2}, the number of different embeddings of $\nu$ keeping
				one face is equal to the number of different $h$ such that $(s_{\nu}^h,\pi_{\nu}^h)$ and
				$(s_{\nu},\pi_{\nu})$ having the same underlying graph.
				We denote this number by $R_{\nu}$. Moreover, the half edges
				contained in $\nu$ split the cyclic plane permutation into $|\nu|$ diagonal blocks.
				We can view these diagonal blocks as they are arranged in a circular fashion, as displayed
				in Figure~\ref{fig-circ}. To reembed $\nu$ means to permute these diagonal blocks circularly.
				\begin{figure}[!htb]
					\centering
					\includegraphics[scale=.6]{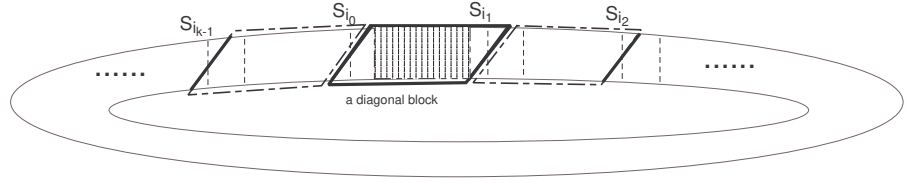}
					{\centering \caption{Circular arrangement of diagonal blocks determined by the
							vertex $v$}\label{fig-circ}}
					
				\end{figure}

				Let $U_{D}$ denote the set of cyclic plane permutations having diagonal $D$, where $D$ is a
				fixed permutation. Note $\mathfrak{p}=(s,\pi)\in U_D$ iff $D=D_{\mathfrak{p}}=s\circ \pi^{-1}$.
				Thus, $|U_D|$ enumerates the ways to write $D$ as a product of an $n$-cycle with
				another permutation.

				A partition $\lambda$ of $n$ is written as $\lambda\vdash n$.
				Let $\mu\vdash n,\eta\vdash n$. We write $\mu\rhd_{2i+1}\eta$ if $\mu$
				can be obtained by splitting one $\eta$-block into $(2i+1)$ non-zero parts. Let
				furthermore $\kappa_{\mu,\eta}$ denote the number of different ways to obtain
				$\eta$ from $\mu$ by merging $\ell(\mu)-\ell(\eta)+1$ $\mu$-blocks into one, where
				$\ell(\mu)$ and $\ell(\eta)$ denote the number of blocks in the partitions $\mu$ and $\eta$,
				respectively.
				\begin{theorem}\cite{chr-1}\label{2cor1}
					Let $p_{k}^{\lambda}(n)$ denote the number of $\mathfrak{p}\in U_{D}$ having
					$k$ cycles, where $D$ is of cycle-type $\lambda$. 
					Let $q^{\lambda}$ denote the number of permutations of cycle-type $\lambda$.
					Then, 
					\begin{equation}
					p_k^{\lambda}(n)=\frac{\sum_{i=1}^{\lfloor\frac{n-k}{2}\rfloor}{k+2i\choose k-1}p_{k+2i}^{\lambda}(n)q^{\lambda}
						+\sum_{i=1}^{\lfloor\frac{n-\ell(\lambda)}{2}\rfloor}\sum_{\mu\rhd_{2i+1}\lambda}
						\kappa_{\mu,\lambda}p_k^{\mu}(n)q^{\mu}}{q^{\lambda}[n+1-k-\ell(\lambda)]}.
					\end{equation}
				\end{theorem}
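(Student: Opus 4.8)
The plan is to recast $p_k^{\lambda}(n)$ as a genus-indexed count and prove the identity as a genus-reduction recurrence in the spirit of Chapuy's trisection recurrence for unicellular maps, here in its hypermap (plane-permutation) form. First I would pass from $p_k^\lambda(n)$ to the type-summed count $P_k^\lambda(n):=q^\lambda p_k^\lambda(n)$, which enumerates cyclic plane permutations $(s,\pi)$ on $[n]$ with $s$ an $n$-cycle, $C(\pi)=k$, and $D_{\mathfrak{p}}=s\pi^{-1}$ of cycle-type $\lambda$; equivalently, one-face hypermaps $(\alpha,\beta,\gamma)=(D_{\mathfrak{p}},\pi,s)$ with $k$ vertices and hyperedge-type $\lambda$. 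Since $p_k^\lambda(n)$ depends only on the type $\lambda$ (it is invariant under conjugating $D_{\mathfrak{p}}$), the factors $q^\lambda$ and $q^\mu$ appearing in the statement are exactly what is needed to phrase everything in terms of the $P$'s, and the asserted formula is equivalent to
\[
[n+1-k-\ell(\lambda)]\,P_k^\lambda(n)=\sum_{i\ge1}\binom{k+2i}{2i+1}P_{k+2i}^\lambda(n)+\sum_{i\ge1}\sum_{\mu\rhd_{2i+1}\lambda}\kappa_{\mu,\lambda}\,P_k^\mu(n),
\]
using $\binom{k+2i}{k-1}=\binom{k+2i}{2i+1}$. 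The first thing to record is that, by the hypermap Euler formula $C(\alpha)+C(\beta)+C(\gamma)-n=2-2g$ with $C(\gamma)=C(s)=1$, the coefficient $n+1-k-\ell(\lambda)$ on the left is precisely $2g$, and that every term on the right has genus $g-i<g$: the summands $P_{k+2i}^\lambda$ carry $2i$ extra vertices and, for $\mu\rhd_{2i+1}\lambda$, the type $\mu$ carries $2i$ extra hyperedges, in both cases lowering $2g$ by $2i$. The summation ranges $i\le\lfloor(n-k)/2\rfloor$ and $i\le\lfloor(n-\ell(\lambda))/2\rfloor$ are exactly the ranges in which such lower-genus objects exist.

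This identifies the target as a recursion that builds each genus-$g$ object from strictly lower genus with $2g$ as the left-hand multiplier. Accordingly I would define, along the single face $s$, a notion of \emph{trisection} of $\mathfrak{p}$ and prove the counting lemma that a genus-$g$ object carries exactly $n+1-k-\ell(\lambda)=2g$ trisections; summing the bijection below over a distinguished trisection then supplies the left-hand side. The heart of the proof is a cut/glue bijection: cutting $\mathfrak{p}$ at a marked trisection either splits one $\pi$-cycle (a vertex) into $2i+1$ cycles or splits one $D_{\mathfrak{p}}$-cycle (a hyperedge) into $2i+1$ cycles, each time reducing the genus by $i$. Classifying the $2g$ trisections by which of the two splittings occurs produces respectively the first and the second sum. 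Conversely, from a lower-genus object one specifies the inverse datum — a choice of $2i+1$ vertices to be merged, counted by $\binom{k+2i}{2i+1}$, or a choice of $2i+1$ hyperedges to be merged into the prescribed $\lambda$-block, counted by $\kappa_{\mu,\lambda}$ — and glues to recover a genus-$g$ object together with the trisection it came from.

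To organize the cut/glue map I would work entirely in the two-line diagonal-block picture underlying localization and inflation: a trisection becomes a local pattern in the circular array of diagonal blocks determined by $s$, and splitting/merging a vertex is realized by the same permuting-of-diagonal-blocks operation that drives Lemma~\ref{2lem2} and Lemma~\ref{lem211}, while splitting/merging a hyperedge is the dual operation on the diagonal $D_{\mathfrak{p}}$. The step I expect to be the main obstacle is the precise bookkeeping of multiplicities: proving that the merge data is recovered without overcounting, that the $\kappa_{\mu,\lambda}$ admissible merges of $\mu$-blocks match the hyperedge trisections bijectively, and — relatedly — pinning down why only odd splits $2i+1$ arise, which is the parity/orientability constraint forcing the genus to drop by an integer $i$. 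Verifying the trisection count $=2g$ and the exactness of these multiplicities is where essentially all the work lies; once both are in place, summing the bijection over the $2g$ trisections yields the displayed identity, and reintroducing $q^\lambda$ gives the theorem.
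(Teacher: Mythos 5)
First, a point of reference: the paper does not prove Theorem~\ref{2cor1} at all --- it is imported from \cite{chr-1} with a citation --- so there is no in-paper proof to measure you against, and your attempt has to stand on its own. Your reformulation is sound as far as it goes: passing to $P_k^\lambda=q^\lambda p_k^\lambda(n)$ is legitimate because $p_k^\lambda(n)$ is invariant under conjugating $D$; the identity $\binom{k+2i}{k-1}=\binom{k+2i}{2i+1}$ is correct; the left-hand coefficient $n+1-k-\ell(\lambda)$ is indeed $2g$ for the one-face hypermap $(D_{\mathfrak p},\pi,s)$; and both families of terms on the right do sit at genus $g-i$ (the $\mu$-terms because $\ell(\mu)=\ell(\lambda)+2i$). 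The sanity check that for maps ($\lambda=2^{n/2}$) the second sum is empty and the identity collapses to Chapuy's recurrence also works in your favour.

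The gap is that the two statements carrying essentially all of the content --- (i) a one-face hypermap of genus $g$ carries exactly $2g$ ``trisections'', and (ii) cutting at a marked trisection is a bijection onto lower-genus objects whose inverse data are counted by exactly $\binom{k+2i}{2i+1}$ for vertex merges and $\kappa_{\mu,\lambda}$ for hyperedge merges --- are announced rather than argued, and they cannot be borrowed off the shelf. Chapuy's trisection theory is built on the corner labelling of a unicellular \emph{map}; in that setting the hyperedge-splitting class does not exist at all (a $2$-block cannot split into $2i+1\ge 3$ parts), so the entire second sum --- the definition of a hyperedge trisection, the proof that the two classes partition all $2g$ trisections with no overlap or residue, and the matching with $\kappa_{\mu,\lambda}$, which lives at the level of integer partitions rather than labelled cycles (this is exactly what the $q^\mu$ factors must absorb) --- is new territory and is precisely where such a proof can fail. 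Note also that the inverse direction of the vertex-gluing step (that an \emph{arbitrary} choice of $2i+1$ of the $k+2i$ vertices admits a canonical gluing recovering a unique marked trisection) needs reproving in the hypermap setting. The cited source \cite{chr-1} reaches the recurrence by a different-looking mechanism: a double counting over rearrangements of diagonal-pairs in the two-line representation (the same move that drives Lemmas~\ref{2lem2} and~\ref{lem211}), together with a case analysis of how $C(\pi)$ and $C(D_{\mathfrak p})$ change under such a rearrangement. Either execute that route or write out the hypermap trisection lemmas in full; as it stands the proposal is a correct reading of the identity, not a proof of it.
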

				
				Note that the localizations $(s_{\nu}^h,\pi_{\nu}^h)$ and $(s_{\nu},\pi_{\nu})$ have
				the same underlying hyper-graph $G$, iff both of them belong to $U_{D}$ where
				$D=D_{\nu}$ and $C(\pi_{\nu}^h)=C(\pi_{\nu})=1$.
				Therefore, if $D_{\nu}$ has cycle-type $\lambda$, $R_{\nu}=p_1^{\lambda}(|\nu|)$.
				As a ressult we obtain
				
				\begin{theorem}\label{3thm2}
					Let $\epsilon$ be a one-face embedding of $G$, and $\nu$ be a vertex of $G$ with
					$deg(\nu)\geq 4$. Then there exists at least one additional way to reembed $\nu$
					such that the obtained embedding $\epsilon'$ has the same genus as $\epsilon$.
				\end{theorem}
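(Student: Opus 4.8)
The plan is to translate the statement into a counting problem about the invariant $R_\nu = p_1^\lambda(|\nu|)$, where $\lambda$ is the cycle-type of the local diagonal $D_\nu$. By the discussion preceding the theorem, a one-face embedding of $G$ means the global diagonal $D_{\mathfrak p}$ is a fixed-point-free involution, and the number of reembeddings of $\nu$ that keep the embedding one-face equals the number of localizations $(s_\nu^h,\pi_\nu^h)$ with the same underlying hypergraph as $(s_\nu,\pi_\nu)$, i.e.\ those lying in $U_{D_\nu}$ with a single cycle. Since the current embedding $\mathfrak p$ itself contributes one such $h$ (the identity), the assertion ``at least one \emph{additional} way'' is exactly the claim that $R_\nu = p_1^\lambda(|\nu|) \geq 2$. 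So the whole theorem reduces to showing $p_1^\lambda(k) \geq 2$ whenever $k = |\nu| = deg(\nu) \geq 4$ and $\lambda$ is the cycle-type of $D_\nu$.

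First I would pin down $k$ and $\lambda$. Because $\nu$ is a genuine graph vertex in a (conventional) map, each half edge incident to $\nu$ occupies one column of the two-line array, so $|\nu| = deg(\nu) = k \geq 4$. Next I would determine the structure of $D_\nu$. The global diagonal is a fixed-point-free involution, and I would use the Localization Lemma (Lemma~\ref{2lem2}) together with the concrete description of $D_\nu$ as built from the diagonal blocks: the local diagonal $D_\nu = s_\nu \circ \pi_\nu^{-1}$ records how the $k$ diagonal blocks around $\nu$ are glued by the surrounding structure. The key quantitative fact I expect to need is a parity/count constraint: since $D_{\mathfrak p}$ is a fixed-point-free involution, $D_\nu$ acting on the $k$ local half edges has a controlled cycle-type $\lambda \vdash k$ — in particular it has no fixed points forced by the single-face condition, and $\ell(\lambda) \leq k/2$ type bounds.

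With $k$ and the admissible $\lambda$ identified, the core is a lower bound on $p_1^\lambda(k)$, the number of ways to write a permutation $D$ of cycle-type $\lambda$ as $s\circ\pi^{-1}$ with $s$ a single $k$-cycle and $\pi$ a single cycle. I would invoke Theorem~\ref{2cor1} from \cite{chr-1}, the recursion for $p_k^\lambda(n)$, specialized to the top part ($k=1$ cycle, $n=|\nu|$). The plan is to show the recursion forces $p_1^\lambda(|\nu|) \geq 2$ once $|\nu|\geq 4$: the right-hand side of the recursion is a sum of nonnegative terms, and I would isolate at least two nonzero contributions (for instance one from the ``splitting'' sum indexed by $\mu \rhd_{2i+1}\lambda$ and one from the ``merging'' $\kappa_{\mu,\lambda}$ term), using that the ranges $\lfloor\frac{n-k}{2}\rfloor$ and $\lfloor\frac{n-\ell(\lambda)}{2}\rfloor$ become nonempty precisely when $n$ is large enough. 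Equivalently, and perhaps more cleanly, I would argue directly: given one one-face local embedding, a transposition of two suitably chosen diagonal blocks (a ``handle slide'' that preserves the single boundary component) produces a genuinely different $h$, and this transposition exists as soon as there are at least $4$ half edges to rearrange.

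The main obstacle I anticipate is verifying that the second, transposed arrangement is genuinely \emph{distinct} as a reembedding and still yields exactly one face — i.e.\ that the candidate $h$ I produce is not accidentally the identity up to the cyclic symmetry of the block arrangement, and that it still lands in $U_{D_\nu}$ with $C(\pi_\nu^h)=1$. Controlling this requires a careful case analysis of $\lambda$: the degenerate cases where $D_\nu$ has very few cycles (or a single long cycle) are where the combinatorics is tightest, and I expect the hypothesis $deg(\nu)\geq 4$ to be exactly the threshold guaranteeing enough room for the construction. If the direct block-swap argument proves delicate, the fallback is the purely algebraic route through Theorem~\ref{2cor1}, where the bound $p_1^\lambda(|\nu|)\geq 2$ follows from positivity of two terms in the recursion for all $|\nu|\geq 4$.
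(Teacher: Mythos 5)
Your reduction of the theorem to the inequality $R_\nu=p_1^{\lambda}(\deg(\nu))\ge 2$ is exactly the paper's starting point and is correct: the identity rearrangement accounts for the given embedding, so one needs a second way to factor $D_\nu$ into two $\deg(\nu)$-cycles. The gap is that you never actually establish this inequality, and each route you sketch has a concrete problem. (i) Your structural claim about $\lambda$ is false: even when the global diagonal is a fixed-point-free involution, $D_\nu$ may have fixed points --- in the paper's own running example the vertex $\nu=\{1,5,4,10\}$ has local diagonal equal to the identity on four points --- so there is no ``no fixed points'' or $\ell(\lambda)\le k/2$ constraint to exploit; the bound must hold for an arbitrary cycle type of an even permutation. (ii) The recursion of Theorem~\ref{2cor1} expresses $p_1^{\lambda}(n)$ as a sum of terms involving the unknown quantities $p_{1+2i}^{\lambda}(n)$ and $p_1^{\mu}(n)$ divided by $q^{\lambda}[n-\ell(\lambda)]$; pointing to two nonzero summands in the numerator does not yield the bound $\ge 2$ after division, and lower-bounding those summands is essentially the original problem again. (iii) The block-swap construction is the right idea generically, but there is one family of local embeddings for which no such swap exists, and handling it is the actual content of the proof.

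The paper's argument splits accordingly: in the generic case a transposition-type rearrangement is supplied by Case~3 of Lemma~3 of \cite{chr-1} (your block swap, made precise); in the exceptional case one is forced to have $\pi_\nu=(v_1,v_d,v_{d-1},\dots,v_2)$, whereupon the cycle type of $D_\nu$ is completely determined (a single $d$-cycle for $d$ odd, two $\tfrac d2$-cycles for $d$ even) and $R_\nu$ is computed exactly from Stanley's closed formula \eqref{3eq6}, giving $\tfrac{2(d-1)!}{d+1}$, respectively $\tfrac{2(d-1)!}{d+1}\bigl(1\mp\binom{d}{d/2}^{-1}\bigr)$, each at least $2$ once $d\ge 4$. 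So the hypothesis $\deg(\nu)\ge 4$ enters through an explicit evaluation in the tight case, not through the recursion. A cleaner uniform route you could have taken (and which the paper itself invokes one theorem later) is Zagier's lower bound $p_1^{\lambda}(k)\ge \tfrac{2(k-1)!}{k-a_1+2}\ge\tfrac{2(k-1)!}{k+2}$ for even cycle types, which already gives $R_\nu\ge 2$ for all $k\ge 4$ with no case analysis; but as written your proposal does not close the key quantitative step.
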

				\begin{proof} Assume $d\geq 4$ and $\epsilon$ is localized at $\nu$
					\begin{eqnarray*}
						(s_{\nu},\pi_{\nu})=\left(\begin{array}{ccccc}
							v_1&v_2&\cdots&v_{d-1}&v_d\\
							v_{i,1}&v_{i,2}&\cdots&v_{i,d-1}&v_{i,d}
						\end{array}\right),
					\end{eqnarray*}
					where $\pi_{\nu}=(v_1,V_2,\cdots, V_{d-1},V_d)$.
					Firstly, if $V_l=v_p, V_m=v_q$ and $1<l<m\leq d, 1<p<q\leq d$, i.e.~Case~$3$
					of~\cite[Lemma~$3$]{chr-1}, then there exists at least one additional way to
					reembed $\nu$ preserving genus.
					Otherwise, we have $\pi_{\nu}=(v_1,v_d,v_{d-1},\ldots,v_2)$.
					In this case,
					\begin{eqnarray*}
						(s_{\nu},\pi_{\nu})=\left(\begin{array}{ccccc}
							v_1&v_2&\cdots&v_{d-1}&v_d\\
							v_{d}&v_{1}&\cdots&v_{d-2}&v_{d-1}
						\end{array}\right),
					\end{eqnarray*}
					whence
					\begin{eqnarray*}
						D_{\nu}=\left\{
						\begin{array}{cc}
							(v_1,v_3,\ldots,v_d,v_2,v_4,\ldots,v_{d-1}), &\quad d\in odd,\\
							(v_1,v_3,\ldots,v_{d-1})(v_2,v_4,\ldots,v_d), &\quad d\in even.
						\end{array}
						\right.
					\end{eqnarray*}
					It remains to show that if $d\geq 4$ we have $R_{\nu}\geq 2$ in all cases.
					To this end, we apply a formula for $p_1^{\lambda}(k)$ due to Stanley~\cite{stan}.
					If $\lambda=(1^{a_1},2^{a_2},\ldots,k^{a_k})$, then
					\begin{equation}\label{3eq6}
					p_1^{\lambda}(k)=\sum_{i=0}^{k-1}\frac{i!(k-1-i)!}{k}\sum_{<r_1,\ldots,r_i>}
					{a_1-1\choose r_1}{a_2\choose r_2}\cdots{a_i\choose r_i}(-1)^{r_2+r_4+r_6+\cdots},
					\end{equation}
					where $<r_1,\ldots,r_i>$ ranges over all non-negative integer solutions of the
					equation $\sum_j jr_j=i$. Applying Stanley's formula we can compute $R_{\nu}$, if
					$d\in odd$ as
					\begin{eqnarray*}
						R_{\nu}=\frac{(d-1)!}{d}\sum_{i=0}^{d-1}(-1)^i{d-1\choose i}^{-1}=\frac{2(d-1)!}{d+1}.
					\end{eqnarray*}
					The simplification of the summation is stems from the following formula \cite{spru}
					\begin{eqnarray*}
						\sum_{i=0}^n (-1)^i{x\choose i}^{-1}=\frac{x+1}{x+2}(1+(-1)^n{x+1\choose n+1}^{-1}).
					\end{eqnarray*}
					It is not hard to see that $R_{\nu}\geq 2$ if $d\geq 4$. Similarly, if $4|d$ and
					$d\geq 4$, we have
					\begin{eqnarray*}
						R_{\nu}&=&\sum_{i=0}^{\frac{d}{2}-1}(-1)^i\frac{i!(d-1-i)!}{d}+
						\sum_{i=\frac{d}{2}}^{d-1}(-1)^i\frac{i!(d-1-i)!}{d}[(-1)^i+(-1)^{i-\frac{d}{2}}{2\choose 1}(-1)]\\
						&=&\frac{2(d-1)!}{d+1}(1-{d\choose \frac{d}{2}}^{-1}).
					\end{eqnarray*}
					If $d\in even$ and $4\nmid d$, we have
					\begin{eqnarray*}
						R_{\nu}&=&\sum_{i=0}^{\frac{d}{2}-1}(-1)^i\frac{i!(d-1-i)!}{d}+
						\sum_{i=\frac{d}{2}}^{d-1}(-1)^i\frac{i!(d-1-i)!}{d}[(-1)^i+(-1)^{i-\frac{d}{2}}{2\choose 1}]\\
						&=&\frac{2(d-1)!}{d+1}(1+{d\choose \frac{d}{2}}^{-1}).
					\end{eqnarray*}
					In both cases, if $d\geq 4$, it is straightforward to show that $R_{\nu}\geq 2$,
					since both $\frac{2(d-1)!}{d+1}$ and $(1-{d\choose \frac{d}{2}}^{-1})$
					are increasing functions of $d$.
					Accordingly, in all cases, if $d\geq 4$, then $R_{\nu}\geq 2$,
					completing the proof.
				\end{proof}
				
				Note in case of a vertex $\nu$ having degree $1$ or $2$, the situation is clear. Thus it
				remains to consider the case $deg(\nu)=3$. For such a vertex, a reembedding preserving genus
				can be impossible. For example, $D_{\nu}=(132)=(123)(312)$ is the unique decomposition of
				$D_{\nu}$.
				
				\begin{corollary}\label{3cor3}
					Any even permutation on $[n]$ with $n\geq 4$ has at least two different factorizations into
					two $n$-cycles.
				\end{corollary}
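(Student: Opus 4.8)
The plan is to identify the number of factorizations of $\sigma$ into two $n$-cycles with the quantity $R_{\nu}=p_1^{\lambda}(n)$ introduced just before Theorem~\ref{3thm2}, and then to read the corollary off from that theorem. Let $\sigma$ be an even permutation on $[n]$ of cycle-type $\lambda$. The first step is to make the dictionary precise: an ordered factorization $\sigma=c_1c_2$ into two $n$-cycles is exactly the same datum as a cyclic plane permutation $(s,\pi)$ with $s=c_1$ and $\pi=c_2^{-1}$, since then $s$ is an $n$-cycle, $C(\pi)=1$, and the diagonal $s\pi^{-1}=c_1c_2=\sigma$; conversely every $(s,\pi)\in U_{\sigma}$ with $C(\pi)=1$ returns the factorization $\sigma=s\cdot\pi^{-1}$ into two $n$-cycles. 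Hence the number of ordered factorizations of $\sigma$ into two $n$-cycles is precisely $p_1^{\lambda}(n)$.

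Next I would realize $\sigma$ as the localized diagonal of a genuine one-face embedding, so that Theorem~\ref{3thm2} becomes applicable. Because $\sigma$ is even it is a product of two $n$-cycles (a classical fact, equivalent to $p_1^{\lambda}(n)\ge 1$), so fix one such expression $\sigma=s\,\pi^{-1}$. The resulting $\mathfrak{p}=(s,\pi)$ is a one-face hypermap whose unique vertex is $\nu=[n]$, of degree $n$, whose underlying graph is $G=(\sigma,Par_{\pi})$, and whose diagonal is $D_{\mathfrak{p}}=\sigma$. Since every half edge lies in $\nu$, the localization $\text{\rm loc}_{\nu}(\mathfrak{p})$ equals $\mathfrak{p}$ itself; thus $D_{\nu}=\sigma$ and $R_{\nu}=p_1^{\lambda}(n)$, the count from the first step.

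Finally, since $\deg(\nu)=n\ge 4$, Theorem~\ref{3thm2} supplies at least one reembedding of $\nu$ distinct from $\mathfrak{p}$ and of the same genus; as $\mathfrak{p}$ has one face and the vertex- and edge-data are unchanged, ``same genus'' here means again one face, so this reembedding is one of those counted by $R_{\nu}$, forcing $R_{\nu}\ge 2$. Through the dictionary of the first step this yields at least two distinct ordered factorizations of $\sigma$ into two $n$-cycles, which is the assertion. The main obstacle is the realization step rather than the final invocation: one must already know that an even permutation factors into two $n$-cycles at all (the $p_1^{\lambda}(n)\ge 1$ input that launches the argument), and one must confirm that the hypotheses of Theorem~\ref{3thm2} ask only for a one-face embedding together with a vertex of degree at least $4$, both of which hold for the single-vertex hypermap $\mathfrak{p}$. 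A direct alternative avoiding the realization step would be to bound $p_1^{\lambda}(n)\ge 2$ for every even $\lambda$ straight from Stanley's formula~\eqref{3eq6}, as was done for the special cycle-types in the proof of Theorem~\ref{3thm2}; but carrying this out for arbitrary $\lambda$ is considerably messier than invoking Theorem~\ref{3thm2}, which already encapsulates the needed bound.
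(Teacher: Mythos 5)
Your proposal is correct and follows essentially the same route as the paper: identify the number of ordered factorizations with $R_{\nu}=p_1^{\lambda}(n)$ and read the bound $R_{\nu}\geq 2$ off from Theorem~\ref{3thm2}. The paper's own proof is terser --- it merely observes that any $D_{\nu}=s_{\nu}\circ\pi_{\nu}^{-1}$ is even and that the theorem yields at least two factorizations --- leaving implicit the realization step (that \emph{every} even permutation arises as the diagonal of a one-vertex, one-face hypermap), which you rightly isolate and justify via the classical fact that an even permutation admits at least one factorization into two $n$-cycles.
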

				\proof Since $D_{\nu}=s_{\nu}\circ \pi_{\nu}^{-1}$ and both $s_{\nu}$ as well as $\pi_{\nu}$
				have only one cycle, $D_{\nu}$ is an even permutation.
				Theorem~\ref{3thm2} implies that $D_{\nu}$ has at least $2$ factorizations into two
				$n$-cycles. \qed
				
				\begin{corollary}\label{3cor4}
					Let $G$ be a graph having $m$ vertices of degree no less than $4$.
					If there exists a one-face embedding of $G$, then there are at least
					$2^m$ one-face embeddings of $G$.
				\end{corollary}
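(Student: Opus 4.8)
The plan is to build at least $2^m$ distinct one-face embeddings of $G$ by reembedding the $m$ vertices of degree $\geq 4$ one at a time, exploiting that each such reembedding always offers at least two one-face choices and that reembedding a vertex touches only that vertex's block in $\pi$. First I would fix a one-face embedding $\epsilon=(s,\pi)$ and label the vertices of degree $\geq 4$ by $\nu_1,\dots,\nu_m$. The key local fact I would isolate is that $R_\nu\geq 2$ holds at \emph{every} degree-$\geq 4$ vertex of \emph{any} one-face embedding, not merely for $\epsilon$: since a one-face embedding has $s$ equal to an $n$-cycle, the top row $s_\nu$ of $\text{\rm loc}_\nu$ is a single $\deg(\nu)$-cycle, and $\pi_\nu$ is always a single $\deg(\nu)$-cycle, so $D_\nu=s_\nu\circ\pi_\nu^{-1}$ is a product of two $\deg(\nu)$-cycles, hence an even permutation on $\deg(\nu)\geq 4$ points. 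Corollary~\ref{3cor3} then gives at least two factorizations of $D_\nu$ into two $\deg(\nu)$-cycles, i.e.\ at least two one-face local embeddings of $\nu$.

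Next I would run an induction on the number $t$ of already-processed vertices. Let $E_t$ be the set of one-face embeddings of $G$ whose cyclic orders at $\nu_{t+1},\dots,\nu_m$ and at all vertices of degree $\leq 3$ coincide with those of $\epsilon$; I claim $|E_t|\geq 2^t$. The base case $E_0=\{\epsilon\}$ is the hypothesis that a one-face embedding exists. For the step $t-1\to t$, I would use the reembedding correspondence (Theorem~\ref{2thm2} together with the Inflation lemma, Lemma~\ref{lem211}), by which a one-face reembedding of a single vertex $\nu_t$ alters $\pi$ only in the cycle of $\nu_t$ and leaves all other vertex cycles fixed. For each $\epsilon'\in E_{t-1}$, which is one-face with $\deg(\nu_t)\geq 4$, the local fact above supplies at least two distinct one-face local embeddings of $\nu_t$, each producing a member of $E_t$.

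It then remains to verify that the resulting map $\psi$ sending a pair (embedding in $E_{t-1}$, choice at $\nu_t$) to its reembedding is injective. If two pairs have different $E_{t-1}$-components, those components already differ in the cyclic order at some $\nu_j$ with $j\leq t-1$, and reembedding $\nu_t$ does not disturb that cycle, so the images differ; if the $E_{t-1}$-components agree but the $\nu_t$-choices differ, the images differ in the $\nu_t$-cycle. Hence $|E_t|\geq\sum_{\epsilon'\in E_{t-1}}R_{\nu_t}(\epsilon')\geq 2\,|E_{t-1}|\geq 2^t$. Taking $t=m$ and noting $E_m$ consists of one-face embeddings of $G$ yields at least $2^m$ of them, proving the corollary.

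I expect the main obstacle to be the interdependence of the local data: reembedding $\nu_t$ changes the global $n$-cycle $s$ and therefore changes the local diagonal $D_{\nu_i}$ at a not-yet-processed vertex $\nu_i$, so one cannot fix a single pair of admissible choices per vertex in advance and simply multiply. The resolution, which I would emphasize, is that only the \emph{lower bound} $R_{\nu_i}\geq 2$ is needed, and by Corollary~\ref{3cor3} this bound is uniform over all even local diagonals on $\geq 4$ points; combined with the fact (Lemma~\ref{lem211}, Theorem~\ref{2thm2}) that a reembedding modifies exactly one vertex-cycle of $\pi$, the induction still delivers a genuine $2^m$-fold family of pairwise distinct one-face embeddings.
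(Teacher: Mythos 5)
Your proof is correct and follows the route the paper intends (the corollary is stated without proof there): iterate the bound $R_{\nu}\geq 2$ of Theorem~\ref{3thm2} over the $m$ vertices of degree at least $4$, using that a reembedding alters only the affected vertex's cycle of $\pi$ so the choices multiply. Your explicit induction, and in particular your observation that the bound $R_{\nu}\geq 2$ must be reapplied to each intermediate one-face embedding (since the local diagonals $D_{\nu_i}$ change along the way) rather than fixed in advance, supplies exactly the detail the paper leaves implicit.
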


				\begin{theorem}
					Let $(s,\pi)$ be a one-face embedding of $G$ and $(s_{\nu},\pi_{\nu})$ its localization
					at $\nu$. Suppose $D_{\nu}$ has cycle-type
					$\lambda=(1^{a_1},2^{a_2},\ldots,k^{a_k})$ where $k=deg(\nu)$, then the probability $prob_1(\nu)$ of a
					reembedding of $\nu$ to be one-face satisfies
					\begin{align}\label{3eq7}
					\frac{2}{deg(\nu)-a_1+2} \leq prob_1(\nu) \leq \frac{2}{deg(\nu)-a_1+\frac{19}{29}}.
					\end{align}
					In particular, for any vertex $\nu$, $prob_1(\nu)\geq \frac{2}{deg(\nu)+2}$.
				\end{theorem}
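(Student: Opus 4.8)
The plan is to write $prob_1(\nu)$ as a ratio and then collapse it to a single analytic quantity depending only on $m:=deg(\nu)-a_1$. First I would record that a reembedding of $\nu$ is exactly a choice of cyclic order of its $deg(\nu)=:k$ half edges, so there are $(k-1)!$ reembeddings in total, while the one-face ones are counted by $R_\nu=p_1^{\lambda}(k)$ as identified above Theorem~\ref{3thm2}. Hence $prob_1(\nu)=p_1^{\lambda}(k)/(k-1)!$. Substituting Stanley's formula~\eqref{3eq6} and using $i!(k-1-i)!/(k-1)!=\binom{k-1}{i}^{-1}$ gives
\[
prob_1(\nu)=\frac1k\sum_{i=0}^{k-1}\binom{k-1}{i}^{-1}[x^i]F(x),\qquad F(x)=(1+x)^{a_1-1}\prod_{j\ge2}\bigl(1+(-1)^{j+1}x^j\bigr)^{a_j},
\]
since the inner alternating sum over $\langle r_1,\dots,r_i\rangle$ in~\eqref{3eq6} is precisely the coefficient of $x^i$ in $F$ (the sign $(-1)^{r_2+r_4+\cdots}$ turns the even-index factors into $1-x^j$).

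Next I would convert the binomial reciprocal into a Beta integral, $\binom{k-1}{i}^{-1}=k\int_0^1 t^{i}(1-t)^{k-1-i}\,dt$, interchange sum and integral, and recognise the resulting finite sum as $(1-t)^{k-1}F\!\bigl(\tfrac{t}{1-t}\bigr)$ (valid because $\deg F=k-1$). The substitution $u=t/(1-t)$ then yields the closed form
\[
prob_1(\nu)=\int_0^\infty \frac{G(u)}{(1+u)^{m+2}}\,du,\qquad G(u)=\prod_{\substack{j\ge2\\ j\ \mathrm{odd}}}(1+u^j)^{a_j}\prod_{\substack{j\ge2\\ j\ \mathrm{even}}}(1-u^j)^{a_j},
\]
with $m=k-a_1=\sum_{j\ge2}ja_j$. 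The decisive feature of this step is that the factor $(1+x)^{a_1-1}$ is absorbed into the weight $(1+u)^{-(m+2)}$, so $prob_1(\nu)$ depends on $\lambda$ only through $m$ and the parts of size $\ge2$; this is exactly why the asserted bounds involve $deg(\nu)-a_1$ rather than the full cycle type.

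It then remains to bound the functional $\Phi(\lambda)=\int_0^\infty G(u)(1+u)^{-(m+2)}\,du$ over all admissible cycle types, namely those with $\sum_{j\ge2}ja_j=m$ and $\sum_{j\ge2}(j-1)a_j$ even (the parity forced because $D_\nu$ is an even permutation, Corollary~\ref{3cor3}). For the lower bound I would evaluate $\Phi$ at $\lambda=(2^{m/2})$: there $G(u)=(1-u)^{m/2}(1+u)^{m/2}$, and the substitution $v=(1-u)/(1+u)$ collapses the integral to $\int_{-1}^{1}\tfrac12 v^{m/2}\,dv=\tfrac{2}{m+2}$, which is the claimed lower bound and is attained when $4\mid m$. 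For the upper bound I would evaluate $\Phi$ at $\lambda=(3,3,3)$: expanding $(1+u^3)^3$ and integrating term by term against the Beta values $\int_0^\infty u^{j}(1+u)^{-11}du=\tfrac{j!(9-j)!}{10!}$ gives $\Phi=\tfrac{29}{140}=\tfrac{2}{9+19/29}$, precisely the stated upper bound with equality. Finally the ``in particular'' clause follows from the lower bound and $m=deg(\nu)-a_1\le deg(\nu)$, so that $\tfrac{2}{m+2}\ge\tfrac{2}{deg(\nu)+2}$.

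The main obstacle is the global optimality of these two evaluations: one must show that the normalised quantity $c(\lambda):=2/prob_1(\nu)-m$ satisfies $\tfrac{19}{29}\le c(\lambda)\le 2$ for \emph{every} admissible cycle type, with the minimum attained only at the sporadic type $(3,3,3)$ and the maximum at $(2^{m/2})$. I expect to control $c(\lambda)$ by exchange arguments on the parts: tracking how $\Phi$ changes under the elementary move that splits a part $j$ into $2+(j-2)$ (or merges two parts), whose effect on $G$ is the insertion of a single factor $(1\pm u^{\,\cdot})$ and can be signed using the Beta evaluations, much as the special cases were handled in Theorem~\ref{3thm2} and as the single-part contributions are handled by the identity of~\cite{spru}. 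Because the extremal constant $19/29$ is attained only at $(3,3,3)$, I anticipate that this reduces to verifying a short finite list of small-$m$ configurations directly and then proving a monotonicity statement that forces all remaining admissible types strictly into the interval $(\tfrac{19}{29},2)$.
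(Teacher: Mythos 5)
Your reduction of the statement to an inequality about $p_1^{\lambda}(k)$ is exactly right: there are $(k-1)!$ reembeddings of $\nu$, the one-face ones number $R_\nu=p_1^{\lambda}(k)$, so $prob_1(\nu)=p_1^{\lambda}(k)/(k-1)!$, and the final clause follows from $\frac{2}{\deg(\nu)-a_1+2}\ge\frac{2}{\deg(\nu)+2}$. This is precisely how the paper argues. But the paper then finishes in one line by \emph{citing} Zagier~\cite{zag}, who proved
\begin{equation*}
\frac{2(k-1)!}{k-a_1+2}\ \leq\ p_1^{\lambda}(k)\ \leq\ \frac{2(k-1)!}{k-a_1+\frac{19}{29}},
\end{equation*}
whereas you set out to reprove this inequality from scratch and do not complete it. Your intermediate work is sound: the identification of Stanley's inner sum as $[x^i]F(x)$, the Beta-integral conversion of $\binom{k-1}{i}^{-1}$, and the resulting representation $prob_1(\nu)=\int_0^\infty G(u)(1+u)^{-(m+2)}\,du$ with $m=k-a_1$ are all correct (and are essentially Zagier's own derivation), as are the two evaluations showing that $\frac{2}{m+2}$ is attained at $\lambda=(2^{m/2})$ with $4\mid m$ and that $\frac{2}{9+19/29}=\frac{29}{140}$ is attained at $\lambda=(3,3,3)$.

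The genuine gap is the step you yourself flag as ``the main obstacle'': showing that $\frac{19}{29}\le 2/prob_1(\nu)-m\le 2$ for \emph{every} admissible cycle type, not merely that the endpoints are attained at two particular types. That global statement \emph{is} Zagier's theorem --- the entire content of the bound --- and your plan to obtain it by ``exchange arguments'' on parts, with an anticipated finite check of small configurations, is a proposal rather than a proof; no monotonicity or sign analysis of the move $j\mapsto 2+(j-2)$ is actually carried out, and it is not obvious such an analysis closes (the sporadic nature of the extremal type $(3,3,3)$ is exactly what makes $19/29$ delicate). As written, the argument establishes the two bounds only for two specific $\lambda$. Either carry out the full optimization over admissible cycle types, or do what the paper does and invoke Zagier's inequality directly; with that citation your first paragraph already constitutes a complete proof.
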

				\proof In Zagier~\cite{zag}, it was proved that
				$$
				\frac{2(k-1)!}{k-a_1+2} \leq p_1^{\lambda}(k) \leq \frac{2(k-1)!}{k-a_1+\frac{19}{29}}.
				$$
				Since there are $(k-1)!$ different ways to embed $\nu$, eq.~\eqref{3eq7} immediately
				follows in view of $p_1^{\lambda}(k)=R_{\nu}$. Clearly, we have
				$\frac{2}{deg(\nu)-a_1+2} \geq \frac{2}{deg(\nu)+2}$, whence the second assertion. \qed

				\begin{corollary}
					If there exists a one-face embedding of $G$, then the probability
					of a random embedding of $G$ to have one face is at least
					$\prod_{\nu \in V(G)} \frac{2}{deg(\nu)+2}$.
				\end{corollary}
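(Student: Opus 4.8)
The plan is to read a uniformly random embedding of $G$ as an independent uniform choice of cyclic order at each vertex, so that the sample space has size $\prod_{\nu\in V(G)}(deg(\nu)-1)!$, and then to lower bound the number $N_1$ of one-face embeddings by $\prod_{\nu\in V(G)}\frac{2(deg(\nu)-1)!}{deg(\nu)+2}$; dividing by the size of the sample space immediately yields the asserted probability. Enumerate $V(G)=\{\nu_1,\dots,\nu_m\}$ and fix, by hypothesis, one one-face embedding $\epsilon_0$ of $G$.

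First I would build one-face embeddings by reembedding vertices one at a time while preserving one face at every step. Let $S_1$ be the set of cyclic orders at $\nu_1$ that keep one face when $\nu_2,\dots,\nu_m$ remain as in $\epsilon_0$; after fixing some $c_1\in S_1$, let $S_2(c_1)$ be the set of cyclic orders at $\nu_2$ that keep one face when $\nu_1$ is set to $c_1$ and $\nu_3,\dots,\nu_m$ remain as in $\epsilon_0$; continue in this fashion, so that $S_i(c_1,\dots,c_{i-1})$ records the admissible reembeddings of $\nu_i$. Each intermediate configuration is one-face by construction, so the preceding Theorem applies at every stage; moreover the reembeddings act on pairwise disjoint coordinates of the rotation system and hence do not interfere.

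The crux --- and the point that blocks a naive multiplication of probabilities --- is that the per-vertex count must be bounded below uniformly over all the intermediate one-face configurations, not just for $\epsilon_0$. This is precisely what the preceding Theorem supplies: with $\lambda$ the cycle-type of the relevant localized diagonal $D_{\nu_i}$ and $a_1\geq 0$ its number of fixed points, one has $|S_i(c_1,\dots,c_{i-1})|=R_{\nu_i}=p_1^{\lambda}(deg(\nu_i))\geq \frac{2(deg(\nu_i)-1)!}{deg(\nu_i)-a_1+2}\geq \frac{2(deg(\nu_i)-1)!}{deg(\nu_i)+2}$, and the final bound is independent of $\lambda$ and of the earlier choices. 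I expect verifying this uniformity (rather than any computation) to be the essential step.

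Finally I would count. A choice sequence $(c_1,\dots,c_m)$ determines the embedding whose cyclic order at $\nu_i$ equals $c_i$, so distinct sequences give distinct rotation systems and hence distinct one-face embeddings. Summing over the nested choices,
\[
N_1\ \geq\ \sum_{c_1\in S_1}\sum_{c_2\in S_2(c_1)}\cdots\sum_{c_m\in S_m(c_1,\dots,c_{m-1})}1\ \geq\ \prod_{\nu\in V(G)}\frac{2(deg(\nu)-1)!}{deg(\nu)+2},
\]
and dividing by $\prod_{\nu\in V(G)}(deg(\nu)-1)!$ gives $\prod_{\nu\in V(G)}\frac{2}{deg(\nu)+2}$, as required.
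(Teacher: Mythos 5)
Your proof is correct and is essentially the argument the paper intends: the corollary is stated without proof immediately after the per-vertex theorem, and the natural justification is exactly your sequential reembedding of the vertices starting from the hypothesized one-face embedding. You correctly identify and handle the one point the paper glosses over, namely that the lower bound $\frac{2(deg(\nu)-1)!}{deg(\nu)+2}$ on the number of admissible local orders must hold uniformly over all intermediate one-face configurations, which follows because the theorem's bound $\frac{2}{deg(\nu)-a_1+2}\geq\frac{2}{deg(\nu)+2}$ is independent of the cycle type of the localized diagonal and hence of the earlier choices.
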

				
				\section{Embeddings with multiple faces}
				
				In this section, we generalize cyclic plane permutations to general plane permutations.
				This puts us in position to study graph embeddings having $k$ faces. Although it is hard
				to determine $g_{min}$ and $g_{max}$, as well as the genus distribution for a given graph
				$G$, we will show that locally these quantities can be more easily obtained.
				
				\begin{definition}\label{4def1}
					A plane permutation on $[n]$ is a pair, $\mathfrak{p}$, of
					permutations $s$ and $\pi$ on $[n]$. The permutation $D_{\mathfrak{p}}=s\circ \pi^{-1}$
					is called the diagonal of $\mathfrak{p}$. If $s$ has $k$ cycles, we write $\mathfrak{p}=(s,\pi)_k$.
				\end{definition}
				
				Assume $s=(s_{11}, \ldots s_{1m_1})(s_{21},\ldots s_{2m_2})\cdots (s_{k1},\ldots s_{km_k})$,
				where $\sum_i m_i =n$. A plane permutation $(s,\pi)_k$ can be represented by two aligned rows:
				\begin{align*}
				\left(\begin{array}{ccccccccccc}
				\boxed{s_{11}} & s_{12} & \cdots & s_{1m_1} & \boxed{s_{21}}&\cdots &\quad s_{2m_2}\quad \cdots &\boxed{s_{k1}}& \cdots & s_{km_k}\\
				\pi(s_{11})&\pi(s_{12})&\cdots & {\boxed{\pi(s_{1m_1)}}}& \pi(s_{21}) & \cdots  & \boxed{\pi(s_{2m_2})}
				\cdots & \pi(s_{k1}) & \cdots & \boxed{\pi(s_{km_k})}
				\end{array}\right).
				\end{align*}
				$D_{\mathfrak{p}}$ can be defined as follows:
				\begin{itemize}
					\item  For $1 \leq i \leq k$, $D_{\mathfrak{p}}(\pi(s_{ij}))=s_{i(j+1)}$ if $j\neq m_i$;
					\item For $1 \leq i \leq k$, $D_{\mathfrak{p}}(\pi(s_{im_i}))=s_{i1}$.
				\end{itemize}
				We call blocks
				\begin{align*}
				\left(\begin{array}{cccc}
				\boxed{s_{i1}} & s_{i2} & \cdots & s_{im_i} \\
				\pi(s_{i1})&\pi(s_{i2})&\cdots & {\boxed{\pi(s_{im_i)}}}
				\end{array}\right)
				\end{align*}
				the cycles of the plane permutation. If the face
				$(s_{i1},\ldots, s_{im_i})$ is incident to a $\mathfrak{p}$-vertex $\nu$,
				the corresponding cycle is said to be incident to $\nu$.
				Since every embedding having $k$ faces can be
				represented by a triple $(\alpha, \beta, \gamma)$, where $\gamma=\alpha \beta$ and
				$\gamma$ has $k$ cycles, any embedding can be expressed via a plane permutation
				$(\gamma,\beta)_k$. Let $H(f)$ denote the set of half edges contained in the face $f$.
				
				\begin{lemma}\label{4lem1}
					Let $\nu$ be a vertex of the graph $G$ and $\epsilon$ be an embedding of $G$, where $\nu$
					is incident to $q$ faces, $f_i$, for $1\leq i \leq q$.
					Let $\epsilon'$ be an embedding, obtained by reembedding $\nu$ such that $\nu$ is
					incident to $q'$ faces, $f'_i$, for $1 \leq i \leq q'$.
					Then we have
					$$
					\bigcup_{i=1}^q H(f_i)=\bigcup_{i=1}^{q'} H(f'_i), \quad\quad q\equiv q' \pmod{2}.
					$$
				\end{lemma}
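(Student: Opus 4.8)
The plan is to translate the statement into the language of the triple $(\alpha,\beta,\gamma)$ underlying the plane permutation $(\gamma,\beta)_k=\epsilon$, where $\gamma=\alpha\beta$. Write $V_\nu\subseteq[n]$ for the set of half edges incident to $\nu$. Reembedding $\nu$ leaves the edge part $\alpha=D_{\mathfrak p}$ untouched and replaces $\beta$ by a permutation $\beta'$ that agrees with $\beta$ off $V_\nu$, while $\beta|_{V_\nu}$ and $\beta'|_{V_\nu}$ are both $d$-cycles on $V_\nu$, with $d=deg(\nu)$. The faces of $\epsilon$ are the cycles of $\gamma=\alpha\beta$ and those of $\epsilon'$ the cycles of $\gamma'=\alpha\beta'$, and a face is incident to $\nu$ exactly when the corresponding cycle meets $V_\nu$. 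Hence $\bigcup_i H(f_i)$ is the union $S$ of all $\gamma$-cycles meeting $V_\nu$, and $\bigcup_i H(f'_i)$ the union $S'$ of all $\gamma'$-cycles meeting $V_\nu$; the two things to prove are $S=S'$ and $q\equiv q'\pmod 2$.

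The crucial observation is that $\gamma$ and $\gamma'$ agree on $[n]\setminus V_\nu$: for $x\notin V_\nu$ we have $\beta(x)=\beta'(x)$, so $\gamma(x)=\alpha\beta(x)=\alpha\beta'(x)=\gamma'(x)$. From this I would deduce $S=S'$ by an orbit argument. If a $\gamma$-cycle is disjoint from $V_\nu$, it lies entirely in $[n]\setminus V_\nu$, where $\gamma$ and $\gamma'$ coincide; iterating the identity $\gamma'=\gamma$ along this orbit shows it is simultaneously a $\gamma'$-cycle disjoint from $V_\nu$. Thus $\gamma$ and $\gamma'$ possess exactly the same cycles among those disjoint from $V_\nu$. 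Taking complements in $[n]$ yields $S=S'$, which is the first assertion, and also shows that the number $r$ of faces not incident to $\nu$ is the same for $\epsilon$ and $\epsilon'$.

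For the parity I would compare cycle numbers through the sign. Setting $\tau=\beta^{-1}\beta'$, we have $\gamma'=\alpha\beta'=(\alpha\beta)(\beta^{-1}\beta')=\gamma\tau$. Now $\tau$ fixes every point outside $V_\nu$, and on $V_\nu$ it equals $(\beta|_{V_\nu})^{-1}\circ(\beta'|_{V_\nu})$, a product of two $d$-cycles; since a $d$-cycle has sign $(-1)^{d-1}$, the permutation $\tau$ is even. Hence $\mathrm{sgn}(\gamma')=\mathrm{sgn}(\gamma)$, and because $\mathrm{sgn}(\sigma)=(-1)^{\,n-C(\sigma)}$ on $[n]$ this forces $C(\gamma)\equiv C(\gamma')\pmod 2$. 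Writing $C(\gamma)=q+r$ and $C(\gamma')=q'+r$ with the common value $r$ from the previous paragraph gives $q\equiv q'\pmod 2$. (The same congruence also drops out of Euler's formula, since reembedding preserves $C(\alpha)$ and $C(\beta)$, so that $C(\gamma)-C(\gamma')=2(g'-g)$ is even.)

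The bookkeeping is routine; the one step that needs genuine care is the orbit argument giving $S=S'$ and with it $r=r'$, because everything downstream relies on $\gamma$ and $\gamma'$ not merely agreeing pointwise off $V_\nu$ but actually sharing their cycles disjoint from $V_\nu$. The parity is then immediate once one notices that altering a single vertex rotation multiplies $\gamma$ by the even permutation $\beta^{-1}\beta'$.
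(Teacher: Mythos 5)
Your proof is correct, and for the main assertion $\bigcup_i H(f_i)=\bigcup_i H(f'_i)$ it is essentially the paper's argument in complementary form: the paper shows that every $\epsilon'$-face either meets $\nu$ or coincides with an $\epsilon$-face not incident to $\nu$, while you show directly that the $\gamma$-cycles and $\gamma'$-cycles disjoint from the half edges of $\nu$ are literally the same cycles (both rest on the single observation that $\gamma$ and $\gamma'$ agree off $V_\nu$). Where you genuinely go beyond the paper is the parity claim $q\equiv q'\pmod 2$: the paper's proof establishes only the face/half-edge claim and leaves the parity implicit, whereas you derive it explicitly by writing $\gamma'=\gamma\,(\beta^{-1}\beta')$ with $\beta^{-1}\beta'$ an even permutation supported on $V_\nu$, so $\mathrm{sgn}(\gamma)=\mathrm{sgn}(\gamma')$ and hence $C(\gamma)\equiv C(\gamma')\pmod 2$, which combined with $r=r'$ gives the congruence. (Your parenthetical Euler-characteristic argument gives the same parity and is closer to what the paper presumably has in mind.) This is a worthwhile addition rather than a deviation; the one step that deserves care, as you note, is that the cycles disjoint from $V_\nu$ coincide as cycles and not merely pointwise, and your orbit argument handles that correctly since $\gamma'$ restricted to such a cycle equals $\gamma$ restricted to it.
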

				\proof
				Let $\epsilon,\epsilon'$ be two embeddings represented by $\mathfrak{p}=(s,\pi)_k$ and
				$\mathfrak{p'}=(s',\pi')_{k'}$, respectively, such that $D_{\mathfrak{p}}=D_{\mathfrak{p'}}$
				and $Par_{\pi}=Par_{\pi'}$. Note that $\epsilon$ and $\epsilon'$ only differ w.r.t.~the
				cyclic order of the half edges around $\nu$. Thus, for $z\not\in \nu$, we have
				$\pi(z)=\pi'(z)$. Clearly, any face $f$ of $\epsilon$ can be expressed as the sequence
				$(D_{\mathfrak{p}}\pi(z), (D_{\mathfrak{p}}\pi)^2(z), \ldots)$ for any $z\in H(f)$.
				The lemma is implied by the following
				
				{\it Claim.} Any face $f'$ of $\epsilon'$ either intersects some $\epsilon$-face
				$f_i$ for $1\leq i \leq q$ and is incident to $\nu$ or it coincides with an
				$\epsilon$-face $f$ not incident to $\nu$.\\
				
				Suppose $f'$ does not intersect any $f_i$ for $1\leq i \leq q$. For any $z\in H(f')$,
				$z\not\in \nu$ holds and by construction $\pi'(z)=\pi(z)$. As a result,
				$D_{\mathfrak{p}}(\pi(z))=D_{\mathfrak{p}}(\pi'(z))=D_{\mathfrak{p'}}(\pi'(z))$,
				i.e.~$f'$ coincides with an $\epsilon$-face $f$ that is not incident to $\nu$.
				
				If $f'$ intersects some $\epsilon$-face $f_i$ for $1\leq i \leq q$, we shall prove that
				$f'$ is incident to $\nu$. Assume the half edge $u$ is contained in the $\epsilon$-face $f_j$ as
				well as in the face $f'$ of $\epsilon'$. Then,
				\begin{align*}
				f_j &=(D_{\mathfrak{p}}\pi(u), (D_{\mathfrak{p}}\pi)^2(u), \ldots, v_i,
				D_{\mathfrak{p}}(\pi(v_i)),\ldots)\\
				f' &=(D_{\mathfrak{p}'}\pi'(u), (D_{\mathfrak{p}'}\pi')^2(u), \ldots),
				\end{align*}
				where $v_i$ is the first half edge of $\nu$ that appears in $f_j$. Since $\pi(z)=\pi'(z)$
				if $z\not\in \nu$, we have $D_{\mathfrak{p}}\pi(u)=D_{\mathfrak{p}}\pi'(u)$, whence the
				entire subsequence from $D_{\mathfrak{p}}(\pi(u))$ to $v_i$ in $f_j$
				appears also in $f'$. In particular we have $v_i\in H(f')$, which means that $f'$ is
				incident to $\nu$ and the Claim follows.\qed
				
				Let $\epsilon$ be an embedding of the graph $G$ and $\nu$ be a vertex of $G$,
				where $\nu$ is incident to $q$ faces in $\epsilon$. Assume $\epsilon$ is represented
				by $\mathfrak{p}=(s,\pi)_k$.
				Similar to the situation of one-face maps,  we can define the localization at $\nu$
				which is a plane permutation having $q$ cycles, $(s_{\nu},\pi_{\nu})_q$, and that is
				obtained as follows:
				the $q$ cycles of $(s_{\nu},\pi_{\nu})_q$ are obtained from the $q$ cycles of $\mathfrak{p}$
				incident to $\nu$ by deleting all columns having no half edges of $\nu$.
				Let $D_{\nu}$ denote the diagonal of $(s_{\nu},\pi_{\nu})_q$.
				By construction, we have $s_{\nu}=D_{\nu}\circ \pi_{\nu}$, having $q$ cycles.
				
				Given a plane permutation $(s'_{\nu},\pi'_{\nu})_{q'}$, where $(D'_{\nu}, Par_{\pi'_{\nu}})=
				(D_{\nu},Par_{\pi_{\nu}})$, we can inflate w.r.t. $\mathfrak{p}$ into an embedding
				of $G$ as in the case of cyclic plane permutations. Namely, we substitute each
				diagonal-pair with the corresponding diagonal block in $\mathfrak{p}$ and then
				add any $\mathfrak{p}$-cycles containing half edges not incident to $\nu$.
				
				Fix an embedding $\epsilon$, represented by the plane permutation $(s,\pi)_{k}$,
				of genus $g(\epsilon)$. We compute in the following the distribution of genera
				resulting from reembedding the vertex $\nu$.
				Let $R_{\nu}(\Delta g)$ denote the number of different embeddings, $\epsilon'$, coming from reembedding $\nu$ such that $g(\epsilon')=g(\epsilon)+\Delta g$ and denote the cycle-type of
				$D_{\nu}$ as $\lambda(D_{\nu})$. Then we have
				
				\begin{theorem}\label{4thm1}
					\begin{align}
					R_{\nu}(\Delta g)= p^{\lambda(D_{\nu})}_{q+2\Delta g}(deg(\nu)),
					\end{align}
				\end{theorem}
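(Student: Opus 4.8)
The plan is to reduce the global enumeration to a purely local one, and then to recognize the local count as a value of $p^{\lambda}_{\bullet}$. First I would establish the plane-permutation analogue of the localization/inflation correspondence of Theorem~\ref{2thm2}. Writing $\epsilon$ as $\mathfrak{p}=(s,\pi)_k$ and localizing at $\nu$ yields $(s_\nu,\pi_\nu)_q$ with $C(s_\nu)=q$ and $\pi_\nu$ a single cycle. Reembedding $\nu$ amounts to permuting the diagonal blocks around $\nu$, i.e.~passing to $(s_\nu^h,\pi_\nu^h)$ with $Par_{\pi_\nu^h}=Par_{\pi_\nu}$; inflating via $\text{\rm inf}_{\mathfrak{p}}$ then returns an embedding $\mathfrak{p}'=(s',\pi')$ of $G$ with $D_{\mathfrak{p}'}=D_{\mathfrak{p}}$ and $Par_{\pi'}=Par_{\pi}$. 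The diagonal-block bookkeeping of Lemmas~\ref{2lem2} and~\ref{lem211} carries over verbatim once $s$ is allowed several cycles, so that $R_\nu(\Delta g)$ equals the number of local objects $(s'_\nu,\pi'_\nu)$ with $D'_\nu=D_\nu$, with $\pi'_\nu$ a single $deg(\nu)$-cycle (so that $\nu$ stays one vertex), and with a prescribed value of $C(s'_\nu)$.

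The second step fixes that prescribed value by genus bookkeeping. Reembedding $\nu$ leaves $\alpha=D_{\mathfrak{p}}$ and the vertex partition $Par_{\pi}$ unchanged, so only the number of faces varies, and Euler's relation gives
\begin{align*}
2\bigl(g(\epsilon')-g(\epsilon)\bigr)=C(\gamma)-C(\gamma').
\end{align*}
By Lemma~\ref{4lem1} every face not incident to $\nu$ is common to $\epsilon$ and $\epsilon'$, so the number of faces off $\nu$ agrees and the whole change is carried at $\nu$: if $\epsilon'$ is incident to $q'$ faces, then $2\Delta g=q-q'$, i.e.~the number of faces incident to $\nu$ in $\epsilon'$ is $q'=q-2\Delta g$. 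Equivalently, applying the hypermap genus formula to the localization $(D_\nu,\pi_\nu,s_\nu)$, where $s_\nu=D_\nu\pi_\nu$, $C(\pi_\nu)=1$ and $C(s_\nu)=q$, yields $2g_\nu=deg(\nu)+1-\ell(\lambda(D_\nu))-q$, so that reembedding changes the local and the global genus by the same amount and forces $C(s'_\nu)=q-2\Delta g$.

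Finally I would identify this local count with $p^{\lambda}_{\bullet}$. With $D'_\nu=D_\nu$ fixed of cycle-type $\lambda(D_\nu)$ and $\pi'_\nu$ ranging over the $deg(\nu)$-cycles (so that $s'_\nu=D_\nu\pi'_\nu$ is determined), the reembeddings producing genus $g(\epsilon)+\Delta g$ are exactly the plane permutations on $[deg(\nu)]$ whose diagonal has type $\lambda(D_\nu)$, whose second coordinate is a single cycle, and whose first coordinate has $q-2\Delta g$ cycles. Since the number of single cycles $\sigma$ with $C(A\sigma)$ prescribed depends only on the cycle-type of $A$, and $D_\nu$ and $D_\nu^{-1}$ share the type $\lambda(D_\nu)$, this number equals $p^{\lambda(D_\nu)}_{q-2\Delta g}(deg(\nu))$ as defined through $U_D$ in Theorem~\ref{2cor1}, which is the asserted formula (the subscript being the number of faces incident to $\nu$ after reembedding).

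I expect the main obstacle to lie in the first step: one must check carefully that permuting the diagonal blocks around $\nu$ realizes \emph{all} reembeddings and \emph{only} legitimate embeddings when $s$ has several cycles, i.e.~that $\text{\rm loc}_\nu$ and $\text{\rm inf}_{\mathfrak{p}}$ remain mutually inverse and faithful to $(D_{\mathfrak{p}},Par_{\pi})$ in the multi-face setting. Once that bijection is in place, the genus bookkeeping is forced by Lemma~\ref{4lem1}, and the final identification is a short conjugation-invariance argument.
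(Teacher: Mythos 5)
Your proposal follows essentially the same route as the paper: localize at $\nu$, show that any reembedding preserves the local diagonal $D_{\nu}$ (the paper's Claim~1, proved by the same diagonal-block argument as Lemma~\ref{2lem2}), show conversely that any local plane permutation with diagonal $D_{\nu}$ and a single vertex-cycle inflates to a legitimate reembedding with the corresponding genus (Claim~2), and then identify the count with $p^{\lambda(D_{\nu})}_{\ast}(deg(\nu))$ --- your explicit conjugation-invariance remark, justifying that $p^{\lambda}_{k}(n)$ also counts factorizations with the roles of the single-cycle and the $k$-cycle coordinates swapped, fills in a step the paper leaves implicit. One caveat: your Euler bookkeeping gives $q'=q-2\Delta g$ faces at $\nu$, i.e.\ $R_{\nu}(\Delta g)=p^{\lambda(D_{\nu})}_{q-2\Delta g}(deg(\nu))$, which is consistent with the definition of $R_{\nu}(\Delta g)$ and with Corollary~\ref{4cor2} but carries the opposite sign to the formula as printed; this is a sign-convention slip in the paper itself (its proof assigns $\epsilon'$ exactly $k+2\Delta g$ faces, which by Euler's formula yields genus $g-\Delta g$), so your derivation is the internally consistent one, though you should not call the printed formula ``the asserted formula'' without flagging the discrepancy.
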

				\proof
				Let $(s,\pi)_{k}$ represent $\epsilon$ and $(s',\pi')_{k+2\Delta g}$ represent $\epsilon'$,
				respectively. Here the index $k+2\Delta g$ stems from $g(\epsilon')=g(\epsilon)+\Delta g$
				which implies that $\epsilon'$ differs by $2\Delta g$ faces from $\epsilon$.
				
				According to Lemma~\ref{4lem1}, we have the following situation:
				$\bigcup_i H(f_i)$ is reorganized into $q+ 2\Delta g$ $\epsilon'$-faces,
				$f'_1,\ldots, f'_{q+2 \Delta g}$ and any other $\epsilon'$-face coincides with some
				$\epsilon$-face not incident to $\nu$.
				
				Let $(s'_{\nu}, \pi'_{\nu})_{q+2\Delta g}$ be the localization of $(s',\pi')_{k+2\Delta g}$,
				having the diagonal $D'_{\nu}$. By definition, $s'_{\nu}=D'_{\nu}\circ \pi'_{\nu}$ has
				$(q+2\Delta g)$ cycles.
				
				{\it Claim $1$.} Given $\epsilon$ represented by $(s,\pi)_{k}$, any reembedding of $\nu$,
				$\epsilon'$ represented by $(s',\pi')_{k+2\Delta g}$ satisfies $D_{\nu}'=D_{\nu}$.\\
				
				Suppose $\nu$ is incident to $q$ $\epsilon$-faces, $f_1,\ldots f_q$. Furthermore, suppose
				the $\epsilon'$-cycle of face $f'_i$ reads:
				\begin{align*}
				\left(\begin{array}{ccccccccccc}
				\boxed{v'_{i1}} & x_1 & \cdots & v'_{i2} & x_2 & \cdots & v'_{i3} &\cdots & v'_{it_i} & \cdots & y\\
				v'_{ij_1} & \cdots & x'_1 & v'_{ij_2} &\cdots & x'_2 & v'_{ij_3} & \cdots & v'_{ij_{t_i}} &\cdots & \boxed{z}
				\end{array}\right),
				\end{align*}
			where $v'_{ik}, v'_{ij_k} \in \nu \ \wedge \ v'_{ij_k}=\pi'_{\nu}(v'_{ik})$.	Then, by the same argument as in the proof for the Lemma~\ref{2lem2}, the diagonal block
				\begin{align*}
				\begin{array}{cccc}
				& x_l & \cdots & v'_{i(l+1)} \\
				v'_{ij_l} & \cdots & x'_l &
				\end{array}
				\end{align*}
				is also a diagonal block in $\epsilon$, which in turn implies $D_{\nu}'=D_{\nu}$.\\

				{\it Claim $2$.} Suppose $(s'_{\nu}, \pi'_{\nu})_{q+2\Delta g}$ is a localization
				such that $D_{\nu}'=D_{\nu}$ and $C(\pi'_{\nu})=1$. Then $(s'_{\nu}, \pi'_{\nu})_{q+2\Delta g}$
				can be inflated into an embedding $\epsilon'$ such that $g(\epsilon')=g(\epsilon)+\Delta g$,
				holds.\\
				
				Suppose $(s'_{\nu}, \pi'_{\nu})_{q+2\Delta g}$ is given by:
				\begin{align*}
				\left(\begin{array}{cccccccc}
				\boxed{v'_{11}} & \cdots & v'_{1t_1} &  \cdots & \boxed{v'_{(q+2\Delta g)1}} &
				\cdots & v'_{(q+2\Delta g)t_{q+2\Delta g}}\\
				\pi'_{\nu}({v'_{11}}) & \cdots & \boxed{\pi'_{\nu}(v'_{1t_1})} &
				\cdots & {\pi'_{\nu}(v'_{(q+2\Delta g)1})} & \cdots &
				\boxed{\pi'_{\nu}(v'_{(q+2\Delta g)t_{q+2\Delta g}})}
				\end{array}\right).
				\end{align*}
				Inflating every diagonal-pair into a diagonal block w.r.t.~$\epsilon$ and adding
				the $\epsilon$-cycles which are not incident to $\nu$, we obtain an embedding
				$\epsilon'$ with $2\Delta g$ more faces than $\epsilon$, i.e.,
				$g(\epsilon')=g(\epsilon)+\Delta g$. By construction, $\epsilon$ and $\epsilon'$ only differ by
				cyclic rearrangement of the half edges around $\nu$. \qed

				We proceed by studying the values of $\Delta g$ in Theorem~\ref{4thm1}, i.e.~the
				set $\{k|p_k^{\lambda}(n)\neq 0\}$. According to \cite{chr-1} we have:
				\begin{proposition}\cite{chr-1}\label{2pro1}
					For a plane permutation $\mathfrak{p}=(s,\pi)$ on $[n]$, the sum of the numbers of
					cycles in $\pi$ and in $D_{\mathfrak{p}}$ is smaller than $n+2$,
					$$
					\max\{k|p_k^{\lambda}(n)\neq 0\}\leq n+1-\ell(\lambda).
					$$
				\end{proposition}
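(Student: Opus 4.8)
The plan is to recognize this bound as nothing more than the triangle inequality for the transposition (reflection) length on $\mathcal{S}_n$, applied to a single well-chosen identity. Recall that for $\mathfrak{p}=(s,\pi)\in U_{D}$ the permutation $s$ is an $n$-cycle, so $C(s)=1$, and the diagonal is $D=D_{\mathfrak{p}}=s\circ\pi^{-1}$. The one algebraic fact I would isolate is the rearrangement $s=D\circ\pi$, which exhibits the $n$-cycle $s$ as a product of the two permutations $D$ and $\pi$ whose cycle numbers we wish to control.

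First I would recall the classical formula for reflection length: for any $\sigma\in\mathcal{S}_n$ the minimal number of transpositions whose product is $\sigma$ equals $n-C(\sigma)$ (a $c$-cycle needs $c-1$ transpositions, and summing over the disjoint cycles gives $n-C(\sigma)$; minimality is standard). Writing $\ell_T(\sigma)=n-C(\sigma)$, this length is subadditive, $\ell_T(\sigma\tau)\le\ell_T(\sigma)+\ell_T(\tau)$, simply because concatenating minimal transposition words for $\sigma$ and $\tau$ produces a (generally non-minimal) word for $\sigma\tau$. Applying subadditivity to $s=D\circ\pi$ gives $n-C(s)\le\bigl(n-C(D)\bigr)+\bigl(n-C(\pi)\bigr)$, and substituting $C(s)=1$ and rearranging yields $C(\pi)+C(D)\le n+1<n+2$, which is exactly the claimed inequality on the numbers of cycles.

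To reach the stated form I would then simply read off the parameters: $k=C(\pi)$ by definition of $p_k^{\lambda}(n)$, while $C(D)=\ell(\lambda)$ since $D$ has cycle-type $\lambda$. Hence every $\mathfrak{p}\in U_{D}$ contributing to a nonzero $p_k^{\lambda}(n)$ obeys $k\le n+1-\ell(\lambda)$, so $\max\{k\mid p_k^{\lambda}(n)\neq0\}\le n+1-\ell(\lambda)$, as asserted.

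I do not anticipate a real obstacle: once $s=D\circ\pi$ is identified the conclusion is forced by subadditivity together with $C(s)=1$, and the only point demanding care is that this tightness is special to cyclic plane permutations. Indeed the same computation for an arbitrary $s$ only gives $C(\pi)+C(D)\le n+C(s)$, and it is precisely the single-cycle $s$ that pins the bound to $n+1$. Equivalently, one may phrase the whole argument topologically: since $s$ is an $n$-cycle, $s=D\pi\in\langle D,\pi\rangle$ acts transitively, so $(D,\pi,s)$ is a connected one-face hypermap whose genus $g\ge0$ satisfies $C(D)+C(\pi)+C(s)=n+2-2g$, and nonnegativity of $g$ (connectedness) delivers the same inequality.
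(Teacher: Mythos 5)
Your argument is correct. Note first that the paper itself gives no proof of this proposition: it is quoted from the reference \cite{chr-1}, so there is no in-paper argument to compare against line by line. On its own terms, your derivation is sound: from $D_{\mathfrak{p}}=s\circ\pi^{-1}$ you correctly extract $s=D_{\mathfrak{p}}\circ\pi$, and subadditivity of the reflection length $\ell_T(\sigma)=n-C(\sigma)$ applied to this factorization gives $n-1\le (n-C(D_{\mathfrak{p}}))+(n-C(\pi))$, i.e.\ $C(\pi)+C(D_{\mathfrak{p}})\le n+1$, which is exactly the stated bound; reading off $k=C(\pi)$ and $\ell(\lambda)=C(D_{\mathfrak{p}})$ then yields $\max\{k\mid p_k^{\lambda}(n)\neq 0\}\le n+1-\ell(\lambda)$. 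Your closing remark is also the right caveat: the statement's phrase ``plane permutation'' must be read as ``cyclic plane permutation'' (i.e.\ $s$ an $n$-cycle), since for general $s$ one only gets $C(\pi)+C(D_{\mathfrak{p}})\le n+C(s)$ --- and indeed the quantity $p_k^{\lambda}(n)$ in the second assertion is defined only for $U_D$, which consists of cyclic plane permutations. The two phrasings you offer (reflection-length subadditivity versus nonnegativity of the genus of the connected hypermap $(D_{\mathfrak{p}},\pi,s)$ via the Euler relation $C(\alpha)+C(\beta)+C(\gamma)-n=2-2g$) are essentially the same inequality in two languages, and either one is a complete, standard proof; the cited source \cite{chr-1} establishes the bound by a direct combinatorial analysis of the two-line array, so your route is at least as economical.
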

				
				Next we show that the maximum can be always achieved.
				
				\begin{proposition}\label{2pro2}
					Let $\lambda\vdash n$ and $n\geq 1$. Then,
					\begin{align}
					\max\{k|p_k^{\lambda}(n)\neq 0\}=n+1-\ell (\lambda).
					\end{align}
				\end{proposition}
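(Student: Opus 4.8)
The plan is to prove the two inequalities separately. The direction $\max\{k\mid p_k^{\lambda}(n)\neq 0\}\le n+1-\ell(\lambda)$ is precisely Proposition~\ref{2pro1}, so the whole task reduces to showing that this upper bound is attained: for an arbitrary permutation $D$ of cycle-type $\lambda$ I must exhibit \emph{one} cyclic plane permutation $(s,\pi)$ with $D_{\mathfrak p}=s\circ\pi^{-1}=D$ whose $\pi$-component has exactly $n+1-\ell(\lambda)$ cycles, i.e.~I must realize the extremal (genus-zero) case of the bound $C(\pi)+C(D)\le n+1$.

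First I would choose a single representative $x_i$ from each of the $\ell=\ell(\lambda)$ cycles $C_1,\dots,C_\ell$ of $D$ and define $\pi=(x_1\,x_2\cdots x_\ell)$, the $\ell$-cycle permuting these representatives and fixing all remaining $n-\ell$ elements. Then $C(\pi)=1+(n-\ell)=n+1-\ell$ immediately, so the cycle count is already correct by construction, and the only remaining point is to check that $s:=D\circ\pi$ is a single $n$-cycle.

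To see this I would trace the orbit of $s=D\circ\pi$ exactly as in the diagonal-block argument of Lemma~\ref{2lem2}. For any non-representative $y$ we have $\pi(y)=y$ and hence $s(y)=D(y)$, so after the step $s(x_i)=D(\pi(x_i))=D(x_{i+1})$ the map $s$ simply follows $D$ through the whole cycle $C_{i+1}$ until it returns to the representative $x_{i+1}$; there the rule changes to $s(x_{i+1})=D(x_{i+2})$, which carries the orbit into the next cycle $C_{i+2}$. Iterating, the orbit visits $C_{i+1},C_{i+2},\dots$ in the cyclic order dictated by $\pi$, exhausts all $\ell$ cycles of $D$, hence all $n$ points, and only then closes up at $x_i$; a fixed point of $D$ (a part $\lambda_i=1$) is swallowed automatically, being ``traversed'' in a single step. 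Thus $s$ is an $n$-cycle, $(s,\pi)$ is a cyclic plane permutation with diagonal $D$, and therefore $p^{\lambda}_{n+1-\ell(\lambda)}(n)\ge 1$.

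Together with Proposition~\ref{2pro1} this yields the asserted equality. I do not anticipate a genuine obstacle: the construction is explicit and the verification is pure bookkeeping of the kind already carried out for the Localization Lemma. The only places deserving care are the degenerate parts $\lambda_i=1$ and the wrap-around as the orbit returns to its initial representative, where one must confirm that every element is visited exactly once and none is missed.
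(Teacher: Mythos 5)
Your argument is correct, but it takes a genuinely different route from the paper. The paper proves attainment of the bound by induction on $\ell(\lambda)$: the base case $\ell(\lambda)=1$ is handled by writing $\alpha=\alpha e_n$ (so $\pi=e_n$ has $n$ cycles), and the inductive step passes from a permutation with $m$ cycles to one with $m+1$ cycles by multiplying by a transposition $(a,b)$ whose entries lie in different cycles, then invokes the upper bound of Proposition~\ref{2pro1} a second time to rule out the possibility that $C(\pi(a,b))=C(\pi)+1$, forcing the cycle count to drop to exactly $n+1-(m+1)$. You instead give an explicit extremal factorization: take $\pi$ to be the $\ell$-cycle on one representative from each $D$-cycle, fixing the remaining $n-\ell$ elements, and verify directly that $s=D\circ\pi$ is an $n$-cycle by tracing its orbit through $C_2,C_3,\dots,C_\ell,C_1$. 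Your orbit-tracing is sound, including the degenerate one-element cycles and the wrap-around, and it matches the paper's composition convention $s=D_{\mathfrak{p}}\circ\pi$. What your construction buys is a concrete witness showing $p^{\lambda}_{n+1-\ell(\lambda)}(n)\geq 1$ with no recursion and no second appeal to the upper bound; what the paper's induction buys is brevity of verification, at the cost of being non-explicit. Both proofs correctly reduce the inequality $\max\{k\mid p_k^{\lambda}(n)\neq 0\}\leq n+1-\ell(\lambda)$ to Proposition~\ref{2pro1}.
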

				\begin{proof}
					For $n=1$, the assertion is clear, whence we can assume w.l.o.g.~$n\geq 2$.
					For any permutation $\alpha$ on $[n]$ of cycle type $\lambda$ and $\ell(\lambda)=1$,
					we have $\alpha=\alpha e_n$ where $e_n$ is the identity permutation on $[n]$ which
					obviously has $n$ cycles. Therefore, in case of $\ell(\lambda)=1$,
					$
					\max\{k|p_k^{\lambda}(n)\neq 0\}=n=n+1-\ell (\lambda).	
					$
					Suppose for any $\lambda$ with $1\leq \ell(\lambda)=m<n$ holds
					$$
					\max\{k|p_k^{\lambda}(n)\neq 0\}=n+1-m.	
					$$
					Let $\alpha'$ be a permutation on $[n]$ of cycle type $\lambda'$ and
					$\ell(\lambda')=m+1$. Since $m+1\geq 2$, we can always find $a$ and $b$ such that
					$a$ and $b$ are in different cycles of $\alpha'$. Let $\alpha=\alpha'(a,b)$. Thus,
					$\alpha$ must be of cycle type $\mu$ for some $\mu$ such that $\ell(\mu)=m$.
					By assumption, there exists a relation
					$\alpha=s\pi$ such that $s$ has only one cycle and $\pi$ has $n+1-m$ cycles.
					Then,
					$$
					\alpha'=\alpha(a,b)=s\pi(a,b).
					$$
					Note $\pi(a,b)$ has the number of cycles either $n+1-m-1$ or $n+1-m+1$. The
					latter is impossible because it would contradict the bound established in
					Proposition~\ref{2pro1}. Hence, for any $\lambda'$ with $\ell(\lambda')=m+1$,
					$$
					\max\{k|p_k^{\lambda'}(n)\neq 0\}=n+1-m-1=n+1-\ell(\lambda'),
					$$
					which completes the proof of the proposition.
				\end{proof}

				\begin{corollary}\label{4cor2}
					Let $\epsilon$ be a fixed embedding of the graph $G$. Then for any vertex
					$\nu$ with localization $(s_{\nu}, \pi_{\nu})_q$, there exists for any
					$$
					-\lfloor\frac{deg(\nu)+1-\ell(\lambda(D_{\nu}))-q}{2}\rfloor \leq \Delta g \leq
					\lfloor \frac{q-1}{2}\rfloor.
					$$
					an embedding
					$\epsilon'$ of $G$ such that $g(\epsilon')=g(\epsilon)+\Delta g$.
				\end{corollary}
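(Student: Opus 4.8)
The plan is to use Theorem~\ref{4thm1} to convert the statement into the positivity of the numbers $p_k^{\lambda}$, and then to prove that positivity across a whole parity-interval of $k$ by a downward induction fuelled by the recursion of Theorem~\ref{2cor1}.

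First I would set $n=deg(\nu)$, $\lambda=\lambda(D_{\nu})$ and $\ell=\ell(\lambda)$, and record that the localization $(s_{\nu},\pi_{\nu})_q$ is itself a hypermap $(D_{\nu},\pi_{\nu},s_{\nu})$ with $s_{\nu}=D_{\nu}\circ\pi_{\nu}$, so the hypermap genus formula gives $n+1-\ell-q=2g_{\nu}$ for its genus $g_{\nu}\ge 0$; in particular $n+1-\ell-q$ is a nonnegative even integer and $\lfloor\frac{n+1-\ell-q}{2}\rfloor=g_{\nu}$. By Theorem~\ref{4thm1}, a reembedding of $\nu$ producing $\epsilon'$ with $g(\epsilon')=g(\epsilon)+\Delta g$ exists if and only if $p_{k}^{\lambda}(n)\neq 0$, where $k$ is the number of faces incident to $\nu$ in $\epsilon'$. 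Since $\epsilon$ and $\epsilon'$ agree away from $\nu$, the faces not incident to $\nu$ are unchanged and the global genus shift is carried entirely by the local face count, so $k=q-2\Delta g$ (and in particular $k\equiv q\pmod 2$). As $\Delta g$ runs over the integers in $-g_{\nu}\le\Delta g\le\lfloor\frac{q-1}{2}\rfloor$, the quantity $k=q-2\Delta g$ runs over exactly the integers of parity $q$ with $1\le k\le n+1-\ell$: the upper end $k=n+1-\ell$ corresponds to $\Delta g=-g_{\nu}$, and the lower end $k\in\{1,2\}$ to $\Delta g=\lfloor\frac{q-1}{2}\rfloor$. Hence it suffices to prove that $p_k^{\lambda}(n)>0$ for every $k$ with $k\equiv n+1-\ell\pmod 2$ and $1\le k\le n+1-\ell$.

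I would prove this by downward induction on $k$ in steps of $2$, beginning at $k=n+1-\ell$. The base case $p_{n+1-\ell}^{\lambda}(n)>0$ is precisely Proposition~\ref{2pro2}, and Proposition~\ref{2pro1} guarantees $p_k^{\lambda}(n)=0$ for $k>n+1-\ell$. For the inductive step, assume $1\le k\le n-1-\ell$ and $p_{k+2}^{\lambda}(n)>0$. Since $\ell\ge 1$ we have $k\le n-2$, so $i=1$ is an admissible index $1\le i\le\lfloor\frac{n-k}{2}\rfloor$ in the first sum of the recursion in Theorem~\ref{2cor1}. Every term of both sums there is nonnegative, while the denominator $q^{\lambda}[\,n+1-k-\ell\,]$ is strictly positive because $k<n+1-\ell$. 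The $i=1$ term of the first sum equals ${k+2\choose k-1}\,p_{k+2}^{\lambda}(n)\,q^{\lambda}$, and here ${k+2\choose k-1}={k+2\choose 3}>0$ for $k\ge 1$, $p_{k+2}^{\lambda}(n)>0$ by the induction hypothesis, and $q^{\lambda}>0$. Thus the numerator is strictly positive and $p_k^{\lambda}(n)>0$, which completes the induction.

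Combining the two paragraphs, for each $\Delta g$ in the stated interval the corresponding $k=q-2\Delta g$ satisfies $p_k^{\lambda}(n)>0$, and inflating (via Theorem~\ref{4thm1}) a localization with that many cycles yields an embedding $\epsilon'$ with $g(\epsilon')=g(\epsilon)+\Delta g$. The part of the argument I expect to require the most care is the bookkeeping at the ends of the range: verifying that the descent by steps of $2$ reaches exactly $k=1$ or $k=2$ according to the parity of $q$, that the denominator $q^{\lambda}[\,n+1-k-\ell\,]$ vanishes only at the base value $k=n+1-\ell$ (which is treated separately), and that the interval for $\Delta g$ in the statement matches $1\le k\le n+1-\ell$ with no gaps. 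Beyond this, the one conceptual point is the remark that a single positive summand ($i=1$) in the recursion of Theorem~\ref{2cor1} already suffices to propagate positivity from $k+2$ down to $k$; no new estimate is needed.
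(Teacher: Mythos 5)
Your proposal is correct and follows essentially the same route as the paper: Proposition~\ref{2pro2} gives positivity of $p^{\lambda}_{n+1-\ell}(n)$ at the top of the range, the recursion of Theorem~\ref{2cor1} (all terms nonnegative, denominator positive for $k<n+1-\ell$) propagates positivity downward in steps of two, and Euler's formula converts the admissible local face counts into the stated interval for $\Delta g$. You merely supply more detail than the paper does (the explicit $i=1$ term and the endpoint bookkeeping via the local genus $g_{\nu}$), and you correctly use $k=q-2\Delta g$ where Theorem~\ref{4thm1} as printed has the sign reversed; this is consistent with the corollary's bounds and with the paper's own proof.
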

				\proof
				According to Corollary~\ref{2cor1}, we have $p^{\lambda}_k(n)\neq 0$ as long as
				$p^{\lambda}_{k+2i}(n)\neq 0$ for some $i>0$ holds.
				Furthermore, Proposition~\ref{2pro2} implies
				$p^{\lambda}_{deg(\nu)+1-\ell(\lambda(D_{\nu}))}(deg(\nu))\neq 0$.
				Therefore, for any
				$$
				1\leq d \leq deg(\nu)+1-\ell(\lambda(D_{\nu})), \quad d\equiv q \pmod{2},
				$$
				reembedding $\nu$ can lead to an embedding where $\nu$ is incident to
				$d$ faces.
				Accordingly, Euler's characteristic formula, implies
				$$
				-\lfloor\frac{deg(\nu)+1-\ell(\lambda(D_{\nu}))-q}{2}\rfloor \leq \Delta g \leq \lfloor \frac{q-1}{2}\rfloor,
				$$
				completing the proof of the corollary.\qed
				
				This result is similar to the result in~\cite{duke}, where it was shown for any $g_{min}(G)\leq
				g\leq g_{max}(G)$, there exists an embedding of $G$ on $S_g$.
				However, while it is very hard to obtain $g_{min}$ and $g_{max}$,
				we obtain easily the local minimum and the local maximum.

				Suppose we are given two vertices, such that there exists no face of $\epsilon$
				incident to both, then we call these two vertices $\epsilon$-face disjoint.
				In view of Lemma~\ref{4lem1}, Corollary~\ref{4cor2} has the following implication.

				\begin{corollary}\label{4cor22}
					Let $\epsilon$ be an embedding of the graph $G$.
					If the vertices $\nu_i=(s_{\nu_i}, \pi_{\nu_i})_{q_i}$, $1\leq i \leq m$, are mutually
					$\epsilon$-face disjoint,
					then there exists an embedding $\epsilon'$ of $G$ for any
					$$
					\sum_{i=1}^m -\lfloor\frac{deg(\nu_i)+1-\ell(\lambda(D_{\nu_i}))-q_i}{2}\rfloor \leq
					\Delta g \leq \sum_{i=1}^m \lfloor \frac{q_i-1}{2}\rfloor.
					$$
					such that $g(\epsilon')=  g(\epsilon)+\Delta g$.
				\end{corollary}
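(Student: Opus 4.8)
The plan is to reduce this $m$-vertex statement to the single-vertex Corollary~\ref{4cor2} by using the $\epsilon$-face disjointness to decouple the reembeddings at the distinct $\nu_i$, and then to add the resulting genus shifts. First I would record that Corollary~\ref{4cor2} realizes \emph{every} integer $\Delta g_i$ in the interval
$$
L_i := -\lfloor\frac{deg(\nu_i)+1-\ell(\lambda(D_{\nu_i}))-q_i}{2}\rfloor \leq \Delta g_i \leq \lfloor\frac{q_i-1}{2}\rfloor =: U_i,
$$
because its proof produces, for each $d\equiv q_i\pmod 2$ with $1\le d\le deg(\nu_i)+1-\ell(\lambda(D_{\nu_i}))$, a reembedding of $\nu_i$ incident to exactly $d$ faces, and $\Delta g_i=(q_i-d)/2$ then runs consecutively through all integers of $[L_i,U_i]$ as $d$ does. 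So if the choice at one vertex can be made without disturbing the data $(D_{\nu_j},q_j)$ that controls the admissible range at the other vertices, the total genus change will be $\Delta g=\sum_i\Delta g_i$.

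The decoupling should come from Lemma~\ref{4lem1}. Write $H(\nu)$ for the union of the sets $H(f)$ over all $\epsilon$-faces $f$ incident to a vertex $\nu$. Reembedding $\nu_i$ fixes every face not incident to $\nu_i$ and only permutes half edges within $H(\nu_i)$, while mutual $\epsilon$-face disjointness says precisely that the sets $H(\nu_1),\dots,H(\nu_m)$ are pairwise disjoint. Accordingly I would reembed $\nu_1,\dots,\nu_m$ one at a time: after treating $\nu_1,\dots,\nu_{i-1}$, the faces incident to $\nu_i$ are still exactly its original $\epsilon$-faces — they were never incident to an earlier $\nu_j$ and so, by the Claim in the proof of Lemma~\ref{4lem1}, were left untouched — so its localization $(s_{\nu_i},\pi_{\nu_i})_{q_i}$, its diagonal $D_{\nu_i}$, and the count $q_i$ are unchanged, and Corollary~\ref{4cor2} applies to $\nu_i$ verbatim at this stage. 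I would also check that $\nu_i$ stays face-disjoint from the untreated vertices, which holds because every face produced around $\nu_j$ in the earlier steps ($j<i$) consists of half edges lying in $H(\nu_j)$, a set disjoint from $H(\nu_i)$ and hence from the half edges of $\nu_i$.

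Granting this non-interference, the genus change is additive, $\Delta g=\sum_{i=1}^m\Delta g_i$, with each $\Delta g_i$ free in $[L_i,U_i]$, and it then only remains to invoke the elementary fact that the sumset of finitely many contiguous integer intervals is the contiguous interval $[\sum_iL_i,\sum_iU_i]$; this is exactly the stated range, and it shows that every value in it is attained. I expect the main obstacle to be the invariance argument of the previous paragraph — establishing that reembedding one vertex changes neither $D_{\nu_j}$ nor $q_j$ for another and preserves pairwise face-disjointness along the whole sequence — since this is the only point where the disjointness hypothesis is genuinely used; once it is in place, the remaining combinatorics is routine.
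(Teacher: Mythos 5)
Your proof is correct and is exactly the argument the paper intends: the paper gives no written proof, only the remark that the statement follows from Lemma~\ref{4lem1} together with Corollary~\ref{4cor2}, and your sequential-reembedding argument (disjointness of the half-edge sets $\bigcup_f H(f)$ over the faces at each $\nu_i$ guarantees that the localization data $(D_{\nu_j},q_j)$ and the pairwise face-disjointness survive each step, so the single-vertex corollary applies verbatim and the contiguous intervals add) is the natural way to fill in that gap. No issues.
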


				The following corollary provides a necessary condition for an embedding of $G$ to be 
				of maximum genus as well as an easy-to-check necessary condition for an embedding of $G$ to be 
				of minimum genus.
				
				\begin{corollary}\label{4cor3}
					If $\epsilon$ is an embedding of the graph $G$ with genus $g_{max}(G)$, then every vertex is
					incident to at most $2$ faces in $\epsilon$.
					Furthermore, if $\epsilon$ is an embedding of the graph $G$ with genus $g_{min}(G)$,
					and $(s_{\nu}, \pi_{\nu})_{q_{\nu}}$ is a localization at $\nu$, then
					\begin{align}
					\ell(\lambda(D_{\nu}))+q_v=deg(\nu)+1.
					\end{align}
				\end{corollary}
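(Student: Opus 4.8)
The plan is to read both assertions as the statement that an extremal embedding admits no single-vertex reembedding that pushes the genus further in the extremal direction, and then to apply Corollary~\ref{4cor2} one vertex at a time. Fix a vertex $\nu$ with localization $(s_\nu,\pi_\nu)_{q_\nu}$ and abbreviate $M_\nu=deg(\nu)+1-\ell(\lambda(D_\nu))$. Corollary~\ref{4cor2} then says that reembedding $\nu$ alone realizes every genus shift in
$$-\left\lfloor\tfrac{M_\nu-q_\nu}{2}\right\rfloor \le \Delta g \le \left\lfloor\tfrac{q_\nu-1}{2}\right\rfloor,$$
so the right and left endpoints measure exactly how far one local move can raise, respectively lower, the genus.

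For the maximum-genus claim I would argue with the upper endpoint. If $g(\epsilon)=g_{max}(G)$, no reembedding can strictly increase the genus, so $\left\lfloor (q_\nu-1)/2\right\rfloor=0$ for every vertex $\nu$. Since $q_\nu\ge 1$ always holds, this forces $q_\nu-1<2$, i.e.\ $q_\nu\le 2$, so every vertex is incident to at most two faces. This half uses only the upper bound of the range and needs no parity information.

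For the minimum-genus claim I would use the lower endpoint symmetrically: since no reembedding of $\nu$ can strictly lower the genus, the left endpoint must vanish, i.e.\ $\left\lfloor (M_\nu-q_\nu)/2\right\rfloor=0$. Because $q_\nu\le M_\nu$ (the value $q_\nu$ is realized by $\epsilon$ itself, while $M_\nu$ is the maximum by Proposition~\ref{2pro2}), this only yields $0\le M_\nu-q_\nu\le 1$. The substantive step is to upgrade this to the exact equality $M_\nu=q_\nu$. Here I would invoke the parity constraint of Lemma~\ref{4lem1}: the maximal face count $M_\nu$ is itself attained by some reembedding of $\nu$ (Proposition~\ref{2pro2}), and Lemma~\ref{4lem1} guarantees $q_\nu\equiv M_\nu\pmod 2$. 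Hence $M_\nu-q_\nu$ is even, and an even integer lying in $\{0,1\}$ must be $0$, giving $q_\nu=M_\nu$, that is $\ell(\lambda(D_\nu))+q_\nu=deg(\nu)+1$.

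The step I expect to be the crux is precisely this last upgrade: the floor estimate cannot by itself distinguish $M_\nu-q_\nu=0$ from $M_\nu-q_\nu=1$, and ruling out the odd gap is exactly where the modulo-two invariance of the number of faces incident to $\nu$ becomes indispensable (equivalently, the cycle-parity identity $C(s_\nu)+C(D_\nu)+C(\pi_\nu)\equiv deg(\nu)\pmod 2$ applied to $s_\nu=D_\nu\,\pi_\nu$ with $C(\pi_\nu)=1$). Once this parity is in hand, both assertions follow immediately from Corollary~\ref{4cor2}.
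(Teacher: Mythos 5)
Your proof is correct and follows the same route as the paper, whose entire proof of this corollary is the single sentence ``The assertions are implied by Corollary~\ref{4cor2}.'' You have in fact supplied the details the paper omits, and you correctly identify the one nontrivial point -- that the floor bound alone only gives $M_\nu-q_\nu\in\{0,1\}$, and the parity invariance $q_\nu\equiv M_\nu\pmod 2$ (from Lemma~\ref{4lem1} together with Proposition~\ref{2pro2}, or equivalently from the sign homomorphism applied to $s_\nu=D_\nu\pi_\nu$) is what forces the exact equality $\ell(\lambda(D_{\nu}))+q_\nu=deg(\nu)+1$.
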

				\proof The assertions are implied by Corollary~\ref{4cor2}.\qed

				The fact that if there exists a vertex incident to at least $3$ faces in an embedding,
				an embedding with higher genus always exists, is well known, see e.g., in~\cite{martin,xuong2}.
				However, to the best of our knowledge, given an embedding $\epsilon$, there is no
				simple characterization in order to determine if there exists an embedding of lower genus.
				Corollary~\ref{4cor3} gives a sufficient condition, i.e., 
				if $\ell(\lambda(D_{\nu}))+q_v\neq deg(\nu)+1$ for some vertex $v$,
				then there exists an embeding of lower genus.

\end{document}